\documentclass[11pt]{article}
\usepackage[margin=1in]{geometry}
\usepackage{amsmath,amssymb,amsthm,mathtools}
\usepackage{microtype}
\usepackage{enumitem}
\usepackage[colorlinks=true,linkcolor=blue,citecolor=blue,urlcolor=blue]{hyperref}
\usepackage[nameinlink,capitalise,noabbrev]{cleveref}
\newtheorem{theorem}{Theorem}[section]
\newtheorem{lemma}[theorem]{Lemma}
\newtheorem{proposition}[theorem]{Proposition}
\newtheorem{corollary}[theorem]{Corollary}

\theoremstyle{definition}

\newcommand{\R}{\mathbb R}
\newcommand{\e}{\mathrm e}
\newcommand{\kk}{\mathbf k}
\newcommand{\uu}{\mathbf u}
\newcommand{\ssv}{\mathbf s}
\newcommand{\cc}{\mathbf c}
\newcommand{\qq}{\mathbf q}
\newcommand{\ee}{\mathbf e}
\newcommand{\Ent}{H}
\newcommand{\DG}{\Delta}
\newcommand{\pot}{\psi}
\newcommand{\KL}{D_{\mathrm{KL}}}
\title{An Improved Upper Bound for Multicolour Ramsey Numbers}
\author{%
 Sunghyeon Jo\\
 {\small Georgia Institute of Technology and QED Audit}\\
 {\small\texttt{sjo65@gatech.edu}}
}
\date{\today}
\begin{document}
\maketitle

\begin{abstract}
Let $R_r(k)$ denote the diagonal $r$-colour Ramsey number.  We prove that
there exist absolute constants $c,K>0$ such that
\[
 R_r(k)\le r^{rk}\exp\!\left(-c\frac{k}{r\log^2(2r)}\right)
\]
for every $r\ge2$ and every $k\ge Kr^2\log^6(2r)$.  This improves the
exponential saving in a recent bound of Yang and Mao by a factor of order
$r\log^2(2r)$.  The proof proceeds through an off-diagonal bound, which
asymptotically improves the classical multinomial bound throughout a
neighbourhood of the diagonal.
\end{abstract}

\section{Introduction}
For integers $r,k\ge2$, let $R_r(k)$ denote the least $n$ such that every
$r$-colouring of the edges of $K_n$ contains a monochromatic copy of $K_k$.
The classical Erd\H{o}s--Szekeres argument \cite{ES} gives
\[
 R_r(k)\le r^{rk}.
\]
For two colours, Campos, Griffiths, Morris and Sahasrabudhe proved the first
exponential improvement over the Erd\H{o}s--Szekeres bound
\cite{CGMS}.  A shorter book-based proof, which extends to every fixed number
of colours, was subsequently obtained by Balister, Bollob\'as, Campos,
Griffiths, Hurley, Morris, Sahasrabudhe and Tiba \cite{BBCGHMST}.  In
particular, for every $r\ge2$ their explicit estimate saves a factor
$\exp(\Omega(k/r^{12}))$ over $r^{rk}$ once $k$ is at least of order
$r^{20}$.  The best known lower bounds are also exponential in $k$ for every
fixed $r$, but a large gap between the lower and upper exponential rates
remains \cite{CamposPohoata}.

Recent work has improved the dependence on the number of colours.
Narang and Tang obtained a saving of order $k/(r^9\log^6(2r))$, for $k$ at
least of order $r^{14}\log^{12}(2r)$, using robust OR polynomials
\cite{NarangTang}.  Yang and Mao
then introduced a variable-order positive root filter and combined it with a
retained-spine refinement of the multicolour book method \cite{YangMao}.  Their
main bound is
\begin{equation}\label{eq:YM-main}
 R_r(k)\le r^{rk}\exp\!\left(-\Omega\!\left(\frac{k}{r^2\log^4(2r)}\right)\right)
\end{equation}
for $k$ at least of order $r^2\log^6(2r)$.
These two scales have different origins.  Recall that a monochromatic book
consists of a monochromatic clique, its spine, together with a set of common
neighbours in the same colour, its page set.  The root-filter argument produces
a spine of relative size
\[
 \theta\asymp \frac1{r\log^2(2r)},
\]
whereas the final step, an application of the classical multinomial bound to
the page set, contributes only $\exp(-\Omega(\theta^2k))$.

Our main observation is that the latter loss is not intrinsic to the book
construction.  Instead of applying the multinomial bound once to the page
set, we retain the full off-diagonal target vector and apply the same argument
recursively.  This yields a saving of order $\theta k$ rather than
$\theta^2k$.  Gupta, Ndiaye, Norin and Wei~\cite{GNNW} also use off-diagonal
Ramsey bounds in an inductive argument that extends to the multicolour
setting.  Here we apply the induction to the page produced by the Yang--Mao
multicolour book construction.

\begin{theorem}\label{thm:main}
There exist absolute constants $c,K>0$ such that, for every $r\ge2$ and every
\[
 k\ge K r^2\log^6(2r),
\]
one has
\[
 R_r(k)\le
 r^{rk}\exp\!\left(-c\frac{k}{r\log^2(2r)}\right).
\]
\end{theorem}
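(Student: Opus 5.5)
We derive \cref{thm:main} from an off-diagonal statement, proved by induction on $|\kk|=\sum_i k_i$ for target vectors $\kk=(k_1,\dots,k_r)$ in a neighbourhood of the diagonal. The baseline is the multinomial bound
\[
 R(\kk)\le \binom{|\kk|}{k_1,\dots,k_r}\approx \exp\bigl(|\kk|\,\Ent(\kk/|\kk|)\bigr).
\]
The aim is
\[
 R(\kk)\le \exp\bigl(|\kk|\,\Ent(\kk/|\kk|)-\pot(\kk)\bigr),
\]
with a saving $\pot(\kk)\ge c\,|\kk|/(r^2\log^2(2r))$ valid whenever $\min_i k_i\ge (1-\delta)\max_i k_i$ for a small absolute $\delta$ and $|\kk|$ large. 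At $\kk=(k,\dots,k)$ we have $|\kk|\Ent=rk\log r$ and $\pot\gtrsim k/(r\log^2(2r))$, which is exactly the theorem.

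For the induction step, take an $r$-colouring of $K_n$ with $n$ equal to the claimed bound. Apply the Yang--Mao book construction (variable-order root filter with retained spine) in the form recalled above. It produces a colour $i$ and a monochromatic book in colour $i$ whose spine has size $t\ge\theta k_i$, with $\theta\asymp 1/(r\log^2(2r))$. The page set has size at least $n\,p_i^{\,t}\,\e^{-o(t)}$, where $p_i$ is essentially the density $k_i/|\kk|$ that the multinomial recursion assigns to colour $i$.

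Rather than applying the multinomial bound to the page set, we apply the induction hypothesis to it with the target $\kk-t\ee_i$. If the page set contains no colour-$i$ copy of $K_{k_i-t}$ and no colour-$j$ copy of $K_{k_j}$ for $j\ne i$, then
\[
 n\,p_i^t\,\e^{-o(t)}< R(\kk-t\ee_i).
\]
Comparing exponents, $|\kk|\Ent(\kk/|\kk|)-(|\kk|-t)\Ent$ at the reduced vector is about $t\log(1/p_i)$, up to a second-order term $O(t^2/k)$ coming from the curvature of the entropy. The inequality then contradicts the induction hypothesis provided that
\[
 \pot(\kk)\le \pot(\kk-t\ee_i)+\Omega(t)-O(t^2/k)-o(t).
\]
With $\pot$ linear in $|\kk|$ with slope $c/(r^2\log^2(2r))$, the loss $\pot(\kk)-\pot(\kk-t\ee_i)$ is only $ct/(r^2\log^2)$. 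The induction closes because the book step gains an $\Omega(\theta^2 k)\gtrsim ct$ advantage each time. Since $\theta\asymp 1/(r\log^2)$ and $t\ge\theta k$, this gain is comparable to $t/(r\log^2)$, which dominates $t^2/k\le\theta t$ once the constants are chosen. The gain accumulates linearly over the recursion rather than being realised only once as $\theta^2k$, and this is the source of the improvement from $\theta^2k$ to $\theta k$. The cleanest way to run this is to take $\pot(\kk)=c'\theta\,\min_i k_i$, or a smoothed version of it, and verify that one book step preserves the inequality.

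Three further steps complete the argument.

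1. \textbf{Staying near the diagonal.} Repeated steps shrink single coordinates. Some coordinate is eventually reduced by at least $\delta k$, and at that point we leave the region where the Yang--Mao construction applies. We handle this by stopping there and using the plain multinomial bound, keeping the savings already accumulated, which total about $\theta\cdot\delta k$. Alternatively, we choose the colour in the construction with the largest remaining $k_i$ so the vector stays balanced. In either case the accumulated saving is $\Omega(k/(r\log^2(2r)))$, as required.

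2. \textbf{Base cases.} These are the multinomial bound itself, and the range $k\ge Kr^2\log^6(2r)$ is inherited from Yang--Mao's hypotheses, which ensure the $o(t)$ error terms are absorbed.

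3. \textbf{The main obstacle.} I expect the hardest step to be checking that the Yang--Mao book lemma holds uniformly for off-diagonal $\kk$ with the right page density, namely $p_i$ matched to $k_i/|\kk|$ rather than $1/r$. The second-order entropy error must also stay below the per-step gain. I would first re-derive their root-filter argument with weights $k_i/|\kk|$ in place of $1/r$, tracking the constants in the regime $|k_i-k_j|\le\delta k$.
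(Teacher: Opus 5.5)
Your architecture matches the paper's. You prove an off-diagonal bound by induction on $B=\sum_ik_i$, and you feed the page of a Yang--Mao book back into the induction hypothesis with the residual target, so each step gains $\asymp\theta t$. Near the diagonal your $c'\theta\min_ik_i$ (with $c'$ small) is essentially the paper's $\Gamma\pot(\kk)$ with $\Gamma=\gamma\theta$, since $2m-M\approx m$ there.

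The induction does not close, however, at the edge of the region where you claim the saving. Suppose the saving is of order $\theta k$ on $\{\min_ik_i\ge(1-\delta)\max_ik_i\}$ and only \eqref{eq:ES} is available outside. Then a step taking $\kk$ across the boundary must pay the whole saving $\asymp\theta k$ out of one step's relative-entropy gain, which is only $\asymp t^2/k\asymp\theta^2k$. Neither of your two fixes works:
\begin{itemize}
\item ``Keeping the savings already accumulated'' has no meaning in an induction on $B$, because the hypothesis at the residual vector is all you can use. Unrolling the recursion into a single trajectory could make this rigorous. You would then have to show that every trajectory gains $\gtrsim\theta\delta k$ before it leaves the region, including trajectories that drift out through many small, nearly proportional steps; that is the estimate the paper proves anyway.
\item You cannot choose the colour, because the book theorem returns whichever spine reaches size $t$ first.
\end{itemize}
The missing idea is a potential that tapers to zero at the boundary. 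The paper uses $\pot(\kk)=(2m-M)_+$, which vanishes continuously at $M=2m$, together with \cref{lem:potential}. That lemma gives $\pot(\kk)-\pot(\kk-\uu)\le 2u_j-u_h\le 4U/r+|2z_j-z_h|$ for any removal $\uu$, with $\Gamma|2z_j-z_h|\le\frac12\DG_{\kk}(\uu)+10\Gamma^2m$.

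Two further ingredients are missing or misstated.

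\textbf{Regularisation.} The book theorem needs $|N_i(x)\cap Y_i|\ge p_i|Y_i|$ in every colour with $p_i\approx k_i/B$, and an arbitrary colouring does not provide this. The paper first runs the weighted Erd\H{o}s--Szekeres process of \cref{lem:weighted}, with $\eta=16\Gamma/r$ and stopping size $s_\ast=\lceil4\Gamma rm\rceil$.
\begin{itemize}
\item If the process reaches $s_\ast$, induction is applied at $\kk-\ssv$. The gain $\DG_{\kk}(\ssv)+s\log(1+\eta)$, with $s\log(1+\eta)\ge8\Gamma s/r$ and $s\ge4\Gamma rm$, beats the bound $\frac12\DG_{\kk}(\ssv)+4\Gamma s/r+10\Gamma^2m$ from \cref{lem:potential}. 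This is where the potential is tested against nearly proportional removals.
\item Otherwise the minimum-degree condition holds, but only with $p_i=q_i-2\eta$, at a cost $t\log(q_i/p_i)\le128\gamma\theta t$.
\end{itemize}
Similarly, the page loss $\Pi_it$ in the book theorem is not $o(t)$. It has to be tuned down to $(3\zeta+6/A)\theta t$ via $\delta_i=\zeta p_i\theta$ and $\lambda_0=AL^2/\theta$.

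\textbf{Sign of the curvature term.} Your bookkeeping reverses it. By \cref{lem:entropy}, $\Ent(\kk)-\Ent(\kk-\uu)=\sum_iu_i\log(1/q_i)+\DG_{\kk}(\uu)$ with $\DG_{\kk}(\uu)\ge0$. For $\uu=\ssv+t\ee_i$ this relative entropy is at least $t^2/(144m)\ge\theta t/288$, and it is the only gain. There is no separate $\Omega(t)$ term, and as written you ask a gain of order $\theta^2k$ to dominate a loss $t^2/k\ge\theta^2k$. The correct requirement is the condition $\mathcal M_i\ge\log2$ in the proof of \cref{thm:offdiag-flex}: $\DG_{\kk}(\uu)+s\log(1+\eta)$ must exceed the sum of the potential loss, $t\log(q_i/p_i)$ and $\Pi_it$ by at least $\log2$.

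Finally, an induction on $B$ needs a statement valid for every $\kk$. The paper adds $A_{r,d}=\Gamma m_0$ so that vectors with $m\le m_0$ follow from \eqref{eq:ES}. Absorbing $A_{r,d}=O(r\log^4(2r))$ into the saving $ck/(r\log^2(2r))$ is what produces the range $k\ge Kr^2\log^6(2r)$. By contrast, the step you flag as the main obstacle is routine, since \cref{lem:increment} already allows colour-dependent parameters.
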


The saving in \cref{thm:main} improves that in \eqref{eq:YM-main} by a factor
of order $r\log^2(2r)$, while retaining the same range of $k$, up to
absolute constants.  These comparisons
concern a growing number of colours.  In particular, for $r=2$, the
specialised bounds of \cite{CGMS,GNNW} are much stronger.

The proof is based on a stronger off-diagonal statement.  For positive
integers $k_1,\ldots,k_r$, write $[r]=\{1,\ldots,r\}$,
$\kk=(k_1,\ldots,k_r)$ and $(x)_+=\max\{x,0\}$, and let $R(k_1,\ldots,k_r)$
be the least $n$ such that every $r$-colouring of $K_n$ contains a
colour-$i$ copy of $K_{k_i}$ for some $i\in[r]$.  Put
\begin{equation}\label{eq:entropy-def-intro}
 B=\sum_{i=1}^r k_i,
 \qquad
 \Ent(\kk)=B\log B-\sum_{i=1}^r k_i\log k_i,
\end{equation}
and
\[
 m=\min_i k_i,\qquad M=\max_i k_i,
 \qquad \pot(\kk)=(2m-M)_+.
\]
The function $\Ent$ is the logarithmic form of the classical multinomial bound,
while $\pot(\kk)$ measures the slack in the inequality $M<2m$.

\begin{theorem}\label{thm:offdiag}
There exist absolute constants $c,C>0$ such that, for every $r\ge2$ and every
positive integer vector $\kk=(k_1,\ldots,k_r)$,
\begin{equation}\label{eq:offdiag}
 R(\kk)\le
 \left\lceil
 \exp\!\left(
 \Ent(\kk)
 -c\frac{\pot(\kk)}{r\log^2(2r)}
 +Cr\log^4(2r)
 \right)
 \right\rceil.
\end{equation}
\end{theorem}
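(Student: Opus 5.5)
We prove \cref{thm:offdiag} by strong induction on $B=\sum_i k_i$, with the potential $\Phi(\kk)=\Ent(\kk)-c\,\pot(\kk)/(r\log^2(2r))+Cr\log^4(2r)$ as the induction quantity. The base cases are the vectors with $\pot(\kk)$ small, say $\pot(\kk)\le C'r^2\log^6(2r)$, and the vectors with some $k_i\le 2$. There the correction term $c\,\pot/(r\log^2 2r)$ is at most $O(r\log^4(2r))$ and is absorbed by $Cr\log^4(2r)$. So it suffices to have the classical multinomial bound $R(\kk)\le\binom{B}{k_1,\ldots,k_r}\le e^{\Ent(\kk)}$. This follows from the Erd\H{o}s--Szekeres recursion $R(\kk)\le\sum_i R(\kk-\ee_i)$, whose solution is bounded by the multinomial coefficient.

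For the inductive step, with $\pot(\kk)$ large, fix an $r$-colouring of $K_n$ with $n=\lceil e^{\Phi(\kk)}\rceil$. We invoke the Yang--Mao multicolour book construction (root filter plus retained spine), taken as a black box in its off-diagonal form. It should produce a colour $i$ and a monochromatic colour-$i$ book with spine of size $t\ge\theta\, k_i$, where $\theta\asymp 1/(r\log^2(2r))$, valid whenever all $k_j$ are comparable, i.e.\ $M<2m$. The page set $P$ should have size at least
\[
|P|\ \ge\ n\exp\bigl(-t\,\partial_i\Ent(\kk)-O(r\log^4(2r))\bigr)\cdot e^{\Omega(t)}, \qquad \partial_i\Ent(\kk)=\log(B/k_i).
\]
That is, we pay roughly $(k_i/B)^t$ for $t$ steps in colour $i$, and gain an extra factor $e^{\Omega(t)}$ relative to the multinomial density. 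If $P$ contains no colour-$i$ $K_{k_i-t}$ and no colour-$j$ $K_{k_j}$ for $j\ne i$, then $|P|<R(\kk-t\ee_i)$. Instead of bounding this by the multinomial bound as Yang--Mao do, we apply the induction hypothesis to $\kk'=\kk-t\ee_i$.

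The comparison then reduces to a concavity estimate. Since $\Ent$ is concave with $\partial_i\Ent=\log(B/k_i)$, we have $\Ent(\kk')\le\Ent(\kk)-t\log(B/k_i)+O(t^2/k_i)$. This must be paired with the change in $\pot$: $\pot(\kk')\ge\pot(\kk)-2t$. Hence the induction loses at most $2ct/(r\log^2(2r))$, which is beaten by the gain $\Omega(\theta t)=\Omega(t/(r\log^2(2r)))$ once $c$ is small. The error $O(t^2/k_i)$ with $t=\theta k_i$ is $O(\theta^2k_i)$. This is exactly the quadratic loss that made Yang--Mao's saving $\theta^2k$, so it must be handled carefully. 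We either take the spine in several rounds of size $t\ll\theta k_i$ each (iterating the book step, which is the point of recursion), or note that the book step's gain is proportional to $t$ for all $t$ up to $\theta k_i$, so choosing $t$ a small constant times $\theta k_i$ makes $t^2/k_i\le \epsilon\theta t$. Combining, $|P|\ge e^{\Phi(\kk')}\ge R(\kk')$, a contradiction.

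The main obstacle is extracting from the Yang--Mao construction an off-diagonal book lemma whose page-density gain is linear in $t$ with the correct constant $\theta$, uniformly over $\kk$ near the diagonal (this is where $\pot>0$, i.e.\ $M<2m$, enters). The additive error must also stay $O(r\log^4(2r))$ independently of $k$, so that it does not accumulate over the induction. To prevent accumulation, we would check that each inductive step strictly gains $\Omega(t/(r\log^2 2r))-O(\text{error})\ge0$ whenever $\pot\ge C'r^2\log^6(2r)$. The error is then never summed, because the constant $Cr\log^4(2r)$ is carried unchanged through the induction.
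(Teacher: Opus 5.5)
There is a genuine gap, and it concerns where the saving comes from.

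\textbf{The book lemma gives no gain.} The off-diagonal book lemma you hope to extract from Yang--Mao does not exist in the form you need. \Cref{thm:book} only guarantees $|P|\ge p_i^{t}\e^{-\Pi_i t}|Y_i|$, where $p_i$ is the minimum colour-$i$ density you have secured and $\Pi_i=O((\zeta+1/A)\theta)$. Relative to $(k_i/B)^t$ this is a small \emph{loss}, never a gain of $\e^{\Omega(\theta t)}$. Indeed $\sum_ip_i\le1$ and you do not choose the colour of the book, so you cannot have $p_i\ge q_i\e^{\Omega(\theta)}$ for every $i$. In a random colouring with colour densities $q_i=k_i/B$, a colour-$i$ spine of size $t$ has about $q_i^tn$ common neighbours.

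\textbf{The concavity term has the wrong sign.} Concavity gives $\Ent(\kk-t\ee_i)\le\Ent(\kk)-t\log(B/k_i)$ with no error term, and \cref{lem:entropy} sharpens this to
\[
 \Ent(\kk)-t\log(B/k_i)-\Ent(\kk-t\ee_i)=\DG_{\kk}(t\ee_i)\ge\frac{(1-q_i)^2t^2}{2k_i}.
\]
This quadratic term is the saving. It is exactly the $\theta^2k$ that Yang--Mao obtain by using it once, through the multinomial bound on the page, and in the paper's induction it is the \emph{only} gain. With $t\ge\theta m/2$ it is of order $\theta t$; the paper gets $\DG_i\ge\theta t/288$ even with the regularisation vertices included. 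That dominates the potential cost $O(\gamma\theta t)$ and the book losses $t\log(q_i/p_i)+\Pi_it=O((\gamma+\zeta+1/A)\theta t)$.

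Your proposed remedy therefore breaks the argument. Shrinking $t$, or splitting the spine into small rounds, makes $t^2/k_i$ negligible, but the gain is quadratic in $t$ while the losses are linear. So $t$ must be as large as the book lemma permits, namely $t=\lfloor\theta m\rfloor$.

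\textbf{The minimum-degree hypothesis is not addressed.} The book lemma needs every vertex to have colour-$i$ density about $k_i/B$ for \emph{all} $i$ simultaneously, and your proposal does not say how to get this. The paper first runs a weighted Erd\H{o}s--Szekeres regularisation (\cref{lem:weighted}) with baseline $q_i$ and a bonus factor $1+\eta$ per step, where $\eta=16\Gamma/r$. This has two outcomes:
\begin{itemize}
\item it builds $s_\ast\approx4\Gamma rm$ clique vertices in various colours, which is handled by a separate induction step on $\kk-\ssv$; or
\item it stops at a set $W$ with the required degrees, but leaves a multicoloured removal $\ssv$ that must be combined with the book. The page is then compared with $R(\kk-\ssv-t\ee_i)$, not $R(\kk-t\ee_i)$.
\end{itemize}
In both branches the crude estimate $\pot(\kk)-\pot(\kk-\ssv)\le2s$ would cost $\Gamma\cdot2s\approx8\Gamma^2rm$. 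For large $r$ this swamps the available gain, which is of order $\theta^2m$. You also cannot simply enlarge $\eta$, because the loss $t\log(q_i/p_i)\approx2\eta t/q_i$ must stay well below $\theta t$. This is why \cref{lem:potential} is needed: it charges only $4\Gamma U/r$, which is covered by $s\log(1+\eta)$, plus $\tfrac12\DG+10\Gamma^2m$.

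\textbf{What matches.} Your overall framing agrees with the paper: induction on $B$, applying the induction hypothesis to the page instead of the multinomial bound, and carrying the additive constant unchanged. What is missing is the correct gain mechanism and the regularisation and potential bookkeeping that make it work.
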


For the diagonal vector, $\Ent(k,\ldots,k)=rk\log r$ and
$\pot(k,\ldots,k)=k$.  The error term in \eqref{eq:offdiag} is absorbed by the
saving once $k\ge K r^2\log^6(2r)$, so \cref{thm:main} follows.  For the proof it
is convenient to keep the order $d$ of the Yang--Mao root filter as a
parameter; the corresponding statement (\cref{thm:offdiag-flex}) is given in
\cref{sec:mainproof}.

Suppose
that $s_i$ vertices have already been removed from each target $k_i$, and
that a colour-$i$ book then removes a further $t$ vertices from the $i$th
coordinate.  The multinomial exponent changes by an exact relative-entropy
term.  If the removed sizes are nearly proportional to the targets, this
term can be small, but then the gain from the regularisation step is
sufficient.  A spine of size $t$ added in a single colour, however, is far from
proportional, and the relative entropy is then
$\Omega(t^2/m)=\Omega(\theta t)$.  This is enough to apply the induction to
the page while keeping a saving of order $\theta$, and iterating turns the
one-step $\theta^2k$ gain into $\theta k$.

The proof uses the higher-order correlation theorem of Yang and Mao only
through their density-increment lemma and the resulting book construction.
We restate the density-increment lemma and prove the form of the book theorem
we need in \cref{sec:books}; the only change is that the baseline density and
the required page size are allowed to depend on the colour.  The
density-increment lemma already allows colour-dependent parameters, so no
new analytic estimate is needed.

\section{Entropy and regularisation}\label{sec:prelim}
Throughout the paper, all logarithms are natural, and implicit constants in
$O(\cdot)$, $\Omega(\cdot)$ and $\Theta(\cdot)$ are absolute unless a
subscript is displayed.  We write $\ee_i$ for the $i$th standard basis vector
of $\R^r$.  In an $r$-edge-coloured complete graph,
$N_i(v)=\{w:vw\text{ has colour }i\}$ denotes the colour-$i$ neighbourhood of
a vertex $v$.

The Erd\H{o}s--Szekeres recursion gives
\begin{equation}\label{eq:ES}
 R(k_1,\ldots,k_r)
 \le \binom{B-r}{k_1-1,\ldots,k_r-1}
 \le \binom{B}{k_1,\ldots,k_r}
 \le \exp(\Ent(\kk)).
\end{equation}
We shall repeatedly compare the exponent before and after a vector of target
sizes is removed.  For probability vectors $p,q\in\R_{>0}^r$, write
\[
 \KL(p\|q)=\sum_{i=1}^r p_i\log\frac{p_i}{q_i}.
\]

\begin{lemma}\label{lem:entropy}
Let $\kk\in\R_{>0}^r$ and let $\uu\in\R_{\ge0}^r$ satisfy
$\cc=\kk-\uu\in\R_{>0}^r$.  Put
\[
 B=\sum_i k_i,\qquad U=\sum_i u_i,
 \qquad q_i=\frac{k_i}{B}.
\]
Then
\begin{equation}\label{eq:entropy-identity}
 \Ent(\kk)+\sum_i u_i\log q_i-\Ent(\cc)
 =(B-U)\,\KL\!\left(\frac{\cc}{B-U}\middle\|\qq\right)
 =:\DG_{\kk}(\uu).
\end{equation}
Moreover, if $z_i=u_i-q_iU$, then
\begin{equation}\label{eq:entropy-lower}
 \DG_{\kk}(\uu)\ge
 \frac12\sum_{i=1}^r\frac{z_i^2}{k_i}.
\end{equation}
\end{lemma}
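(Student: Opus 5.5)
The proof is a direct computation for the identity \eqref{eq:entropy-identity}, followed by a termwise Bregman-divergence estimate for \eqref{eq:entropy-lower}. Throughout, write $C_0=B-U=\sum_i c_i>0$ and $a_i=C_0q_i=C_0k_i/B$.

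\emph{The identity.} Since $\sum_i k_i=B$, we can rewrite $\Ent(\kk)=B\log B-\sum_i k_i\log k_i=-\sum_i k_i\log q_i$. In the same way, $\Ent(\cc)=-\sum_i c_i\log(c_i/C_0)$. Substituting both, the left-hand side of \eqref{eq:entropy-identity} becomes
\[
-\sum_i (k_i-u_i)\log q_i+\sum_i c_i\log\frac{c_i}{C_0}
=\sum_i c_i\log\frac{c_i}{C_0q_i}.
\]
This is exactly $C_0\,\KL(\cc/C_0\,\|\,\qq)$.

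\emph{The lower bound.} First note that $c_i-a_i=k_i-u_i-(B-U)k_i/B=-(u_i-q_iU)=-z_i$. Next, since $\sum_i c_i=\sum_i a_i=C_0$, we may add $\sum_i(a_i-c_i)=0$ to the sum above:
\[
\DG_{\kk}(\uu)=\sum_i g_{a_i}(c_i),\qquad g_a(x)=x\log\frac{x}{a}-x+a .
\]
Each term is nonnegative. Its Taylor expansion about $a$ with integral remainder is
\[
g_a(x)=\int_a^x\frac{x-t}{t}\,dt .
\]
For $t$ between $a$ and $x$ the integrand is bounded below by $(x-t)/\max\{x,a\}$ (with the correct sign on both orientations). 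This gives
\[
g_a(x)\ge\frac{(x-a)^2}{2\max\{x,a\}} .
\]
Finally, $c_i\le k_i$ because $u_i\ge0$, and $a_i=C_0k_i/B\le k_i$ because $U\ge0$. Hence $\max\{c_i,a_i\}\le k_i$ and $g_{a_i}(c_i)\ge z_i^2/(2k_i)$. Summing over $i$ yields \eqref{eq:entropy-lower}.

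The only point requiring care is the last step: we must use the unnormalised (Bregman) form of the divergence, so that the termwise estimate is valid. We must also check that both $c_i$ and $a_i$ are bounded by $k_i$, so that the denominator is $k_i$ rather than something larger. The rest is routine algebra.
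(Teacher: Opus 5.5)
Your proposal is correct and follows the paper's proof. The identity comes from the same direct expansion, and the lower bound uses the same termwise inequality $x\log(x/y)-x+y\ge (x-y)^2/(2\max\{x,y\})$ with $x=c_i$ and $y=(B-U)q_i$, both at most $k_i$; the only addition is that you prove this inequality via the integral form of Taylor's remainder, which the paper leaves implicit.
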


\begin{proof}
Expanding the left-hand side of \eqref{eq:entropy-identity} gives
\[
 \sum_i c_i\log\frac{c_i/(B-U)}{k_i/B},
\]
which is the stated relative entropy.  For the lower bound, use
\[
 x\log(x/y)-x+y\ge \frac{(x-y)^2}{2\max\{x,y\}}
 \qquad(x,y>0)
\]
with $x=c_i$ and $y=(B-U)q_i$.  Since both are at most $k_i$ and
$c_i-(B-U)q_i=-z_i$, summing over $i$ proves \eqref{eq:entropy-lower}.
\end{proof}

For $\kk\in\R_{>0}^r$, recall that
\[
 m=\min_i k_i,\qquad M=\max_i k_i,
 \qquad \pot(\kk)=(2m-M)_+.
\]
The next estimate is the reason for this particular potential.  If $\uu$ is
nearly proportional to $\kk$, then $\pot$ changes only on the scale $U/r$;
the deviation from proportionality is paid for by the relative-entropy term.

\begin{lemma}\label{lem:potential}
Let $\kk\in\R_{>0}^r$ satisfy $\pot(\kk)>0$, let
$\uu\in\R_{\ge0}^r$ and $\cc=\kk-\uu\in\R_{>0}^r$, and put
$U=\sum_i u_i$ and $m=\min_i k_i$.  For every $\Gamma>0$,
\[
 \Gamma\bigl(\pot(\kk)-\pot(\cc)\bigr)
 \le \frac{4\Gamma U}{r}
      +\frac12\DG_{\kk}(\uu)+10\Gamma^2m.
\]
\end{lemma}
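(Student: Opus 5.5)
The plan is to reduce $\pot(\kk)-\pot(\cc)$ to a linear expression in two coordinates of $\uu$. I then split each coordinate into its proportional part $q_iU$ and its deviation $z_i=u_i-q_iU$, as in \cref{lem:entropy}. The proportional part gives the $4\Gamma U/r$ term. The deviations are absorbed by \eqref{eq:entropy-lower} via AM--GM, and this produces the $10\Gamma^2 m$ error.

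\textbf{Step 1 (reduction).} Write $m_c=\min_i c_i$ and $M_c=\max_i c_i$. Since $(x)_+\ge x$, we have $\pot(\cc)\ge 2m_c-M_c$. Because $\pot(\kk)>0$, also $\pot(\kk)=2m-M$. Choose $j$ with $c_j=m_c$ and $l$ with $k_l=M$. Then $m_c=k_j-u_j\ge m-u_j$ and $M_c\ge c_l=M-u_l$, so
\[
 \pot(\kk)-\pot(\cc)\le 2(m-m_c)-(M-M_c)\le 2u_j-u_l .
\]

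\textbf{Step 2 (proportional part).} Substitute $u_i=q_iU+z_i$. The hypothesis $\pot(\kk)>0$ gives $M<2m$, so every $k_i<2m$. Also $B\ge rm$, hence $q_i=k_i/B\le 2/r$. Dropping $-q_lU\le 0$, this gives
\[
 2u_j-u_l\le \frac{4U}{r}+2z_j-z_l .
\]
If $j=l$, the right-hand side is instead $q_jU+z_j\le 2U/r+z_j$. After multiplying by $\Gamma$, it remains to bound $\Gamma(2|z_j|+|z_l|)$, or $\Gamma|z_j|$ when $j=l$, by $\tfrac12\DG_{\kk}(\uu)+10\Gamma^2m$.

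\textbf{Step 3 (absorbing deviations).} By \eqref{eq:entropy-lower}, $\tfrac12\DG_{\kk}(\uu)\ge \tfrac14\sum_i z_i^2/k_i$. AM--GM gives $a\Gamma|z_i|\le z_i^2/(4k_i)+a^2\Gamma^2k_i$.

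When $j\neq l$, apply this with $a=2$ at $i=j$ and $a=1$ at $i=l$. The quadratic terms are covered by two distinct summands of $\tfrac14\sum_i z_i^2/k_i$. The remainder is $\Gamma^2(4k_j+k_l)\le\Gamma^2(8m+2m)=10\Gamma^2m$.

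When $j=l$, only $a=1$ is needed, and the remainder is at most $2\Gamma^2m$.

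The only delicate point is Step 1. There one must use the lower bound $\pot(\cc)\ge 2m_c-M_c$ even when $\pot(\cc)=0$, and pick the indices $j,l$ so that only one coordinate enters each of the two terms. Everything else is routine bookkeeping with the constant $10$ coming from $k_i<2m$.
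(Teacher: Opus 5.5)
Your Step 1 contains an inequality that goes the wrong way. From $\pot(\cc)\ge 2m_c-M_c$ you correctly get $\pot(\kk)-\pot(\cc)\le 2(m-m_c)+(M_c-M)$. To reach $2u_j-u_l$ you therefore need an \emph{upper} bound $M_c\le M-u_l$. With $l$ chosen so that $k_l=M$, what you actually have is the lower bound $M_c\ge c_l=M-u_l$, which points the wrong way: the maximum of $\cc$ can sit at a coordinate where $\uu$ is small.

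A concrete counterexample: take $r=2$, $\kk=(10,11)$, $\uu=(0,6)$, so $\cc=(10,5)$. Then $\pot(\kk)=9$ and $\pot(\cc)=0$. Your choice forces $j=l=2$, giving $2u_j-u_l=u_2=6<9$. So the reduction fails as written, and with it everything built on it. Ironically, this is exactly the point you flagged as delicate.

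The repair is to choose $l$ with $c_l=\max_i c_i$ (the paper's index $h$) rather than $k_l=\max_i k_i$. Then $M_c=c_l=k_l-u_l\le M-u_l$, which gives $M_c-M\le -u_l$ and hence $\pot(\kk)-\pot(\cc)\le 2u_j-u_l$.

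Your Steps 2 and 3 use only $q_j\le 2/r$, $q_l\ge 0$ and $k_j,k_l<2m$. These still hold, so those steps go through unchanged; the case $j=l$ now simply means all $c_i$ are equal. With this fix your argument coincides with the paper's. The only cosmetic difference is how the deviations are absorbed. You bound $2\Gamma|z_j|$ and $\Gamma|z_l|$ by two separate AM--GM steps. The paper instead uses Cauchy--Schwarz, $|2z_j-z_l|^2\le(4k_j+k_l)\sum_i z_i^2/k_i\le 20m\,\DG_{\kk}(\uu)$, followed by a single AM--GM. Both routes give the same constant $10$.
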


\begin{proof}
Since $\pot(\kk)>0$, we have $M<2m$.  Choose $j$ with
$c_j=\min_i c_i$ and $h$ with $c_h=\max_i c_i$.  Then
\[
 \pot(\kk)-\pot(\cc)
 \le (2m-M)-(2c_j-c_h)\le2u_j-u_h.
\]
Let $q_i=k_i/B$ and $z_i=u_i-q_iU$.  Since $M<2m$ and $B\ge rm$, we have
$q_j<2/r$, and hence
\[
 2u_j-u_h\le\frac{4U}{r}+|2z_j-z_h|.
\]
By Cauchy--Schwarz and \eqref{eq:entropy-lower},
\[
 |2z_j-z_h|^2
 \le(4k_j+k_h)\sum_i\frac{z_i^2}{k_i}
 \le20m\DG_{\kk}(\uu).
\]
The result follows from
$\Gamma\sqrt{20m\DG}\le \DG/2+10\Gamma^2m$.
\end{proof}

We shall also need a weighted form of the multicolour Erd\H{o}s--Szekeres
regularisation, in which the common baseline $1/r$ is replaced by the target
proportion $q_i=k_i/B$.

\begin{lemma}\label{lem:weighted}
Let $\qq=(q_1,\ldots,q_r)$ be a probability vector with every $q_i>0$, let
$0<\eta<\min_iq_i$, and let $s_\ast$ be a positive integer.  Every
$r$-edge-coloured $K_n$ contains pairwise disjoint sets
$S_1,\ldots,S_r,W$ such that, writing $s_i=|S_i|$ and $s=\sum_i s_i$,
\begin{equation}\label{eq:weighted-size}
 |W|\ge n(1+\eta)^s\prod_{i=1}^r q_i^{s_i},
\end{equation}
$S_i$ is a colour-$i$ clique, and every edge between $S_i$ and $W$ has colour
$i$.  Moreover, either $s=s_\ast$, or
\begin{equation}\label{eq:weighted-degree}
 |N_i(w)\cap W|\ge(q_i-\eta)|W|-1
 \qquad(w\in W,\ i\in[r]).
\end{equation}
\end{lemma}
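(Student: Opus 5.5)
We prove \cref{lem:weighted} with a greedy procedure, the weighted version of the usual Erd\H{o}s--Szekeres regularisation. Initially all $S_i$ are empty and $W=V(K_n)$, so \eqref{eq:weighted-size} holds with equality since $s=0$. We maintain three invariants: $S_i$ is a colour-$i$ clique, all edges from $S_i$ to $W$ have colour $i$, and $|W|\ge n(1+\eta)^s\prod_i q_i^{s_i}$.

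At each step we stop if $s=s_\ast$, or if every $w\in W$ satisfies \eqref{eq:weighted-degree} for every colour. Otherwise there are $w\in W$ and $i\in[r]$ with $|N_i(w)\cap W|<(q_i-\eta)|W|-1$. We then move a single vertex into some $S_{i'}$ and shrink $W$ to a colour-$i'$ neighbourhood, choosing $i'$ as follows.

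The key counting step: the sets $N_\ell(w)\cap W$ for $\ell\in[r]$ partition $W\setminus\{w\}$, so
\[
\sum_{\ell\ne i}|N_\ell(w)\cap W| > |W|-1-(q_i-\eta)|W|+1=(1-q_i+\eta)|W|.
\]
Since $\sum_{\ell\ne i}q_\ell=1-q_i$, some $\ell\ne i$ satisfies
\[
|N_\ell(w)\cap W|>\Bigl(q_\ell+\frac{\eta q_\ell}{1-q_i}\Bigr)|W|\ge q_\ell(1+\eta)|W|.
\]
Here we used $1-q_i\le1$. If the averaging is done with weights $q_\ell/(1-q_i)$, this is a clean pigeonhole: the alternative would give $\sum_{\ell\ne i}|N_\ell(w)\cap W|\le(1-q_i)(1+\eta)|W|$, which is at most $(1-q_i+\eta)|W|$, a contradiction.

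Take $i'=\ell$, add $w$ to $S_\ell$, and replace $W$ by $N_\ell(w)\cap W$. Each invariant survives:
\begin{itemize}[nosep]
\item $S_\ell\cup\{w\}$ is still a colour-$\ell$ clique, because $w$ was in the old $W$ and so is joined to $S_\ell$ in colour $\ell$.
\item Edges from the old sets to the new $W$ keep their colours, since the new $W$ is a subset of the old one, and the edges from $w$ to the new $W$ have colour $\ell$.
\item $s$ increases by one and $|W|$ is multiplied by more than $q_\ell(1+\eta)$, so the size bound is preserved.
\item The sets stay disjoint, because $w\notin N_\ell(w)$.
\end{itemize}

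Since $s$ increases at every step, the process stops after at most $s_\ast$ steps. At that point either $s=s_\ast$ or \eqref{eq:weighted-degree} holds for all $w\in W$ and $i\in[r]$, as required.

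The only delicate point is the pigeonhole inequality above, in particular tracking the $-1$ coming from $w$ itself. This is why the degree condition is stated with $-1$.

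The hypothesis $\eta<\min_i q_i$ is needed only so that $q_i-\eta>0$ and \eqref{eq:weighted-degree} is meaningful. We should also allow $W$ to become empty. In that case the degree condition holds vacuously, since $W$ has no vertices to check, and the size bound still holds.
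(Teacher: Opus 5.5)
Your proposal is correct and follows the paper's proof essentially step for step. It uses the same greedy weighted regularisation, the same count over $W\setminus\{w\}$ giving $\sum_{\ell\ne i}|N_\ell(w)\cap W|>(1-q_i+\eta)|W|$, the same $q$-weighted pigeonhole producing a colour $\ell$ with $|N_\ell(w)\cap W|>q_\ell(1+\eta)|W|$, and the same invariant and termination checks (your aside about $W$ becoming empty is unnecessary, since that strict inequality keeps $W$ nonempty whenever $n\ge1$, but it does no harm).
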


\begin{proof}
Start with $W=V(K_n)$ and all $S_i$ empty.  Suppose $s<s_\ast$ and
\eqref{eq:weighted-degree} fails at $(w,\ell)$.  Since the edges from $w$ to
$W\setminus\{w\}$ have exactly one colour,
\[
 \sum_{j\ne\ell}|N_j(w)\cap W|>(1-q_\ell+\eta)|W|.
\]
Averaging with weights $q_j/(1-q_\ell)$ shows that some $j\ne\ell$ satisfies
\[
 |N_j(w)\cap W|>
 q_j\left(1+\frac{\eta}{1-q_\ell}\right)|W|
 \ge q_j(1+\eta)|W|.
\]
Append $w$ to $S_j$ and replace $W$ by $N_j(w)\cap W$.  This preserves the
clique and attachment properties and multiplies the right-hand side of
\eqref{eq:weighted-size} by at least $q_j(1+\eta)$.  The procedure stops after
at most $s_\ast$ steps.
\end{proof}

\section{The book lemma}\label{sec:books}
We need a colour-dependent version of the Yang--Mao book lemma.
Given an $r$-edge-coloured complete graph, a colour $i\in[r]$, and nonempty
vertex sets $X,Y$, write
\[
 p_i(X,Y)=\min_{x\in X}\frac{|N_i(x)\cap Y|}{|Y|}.
\]
Thus $p_i(X,Y)$ is the minimum colour-$i$ density from $X$ to $Y$; the sets
$X$ and $Y$ need not be disjoint.  A \emph{colour-$i$ book} is an
ordered pair $(T,P)$ of disjoint vertex sets such that $T$ is a colour-$i$
clique and every edge between $T$ and $P$ has colour $i$.  We call $T$ the
\emph{spine} and $P$ the \emph{page set}.  This is the terminology used in
\cite{BBCGHMST,YangMao}.
For an integer $d\ge3$, the root-filter construction of Yang and Mao
provides parameters
\begin{equation}\label{eq:corr-params}
 \beta_r=\frac1{4r2^{r-1}(r+1)},
 \qquad C_{r,d}\le C_0rd^2\log(2rd),
\end{equation}
where $C_0$ is absolute.

\begin{lemma}[{\cite[Theorem~3.4 and Lemma~4.1]{YangMao}}]\label{lem:increment}
Let $r\ge2$ and $d\ge3$ be integers, and let $\beta_r,C_{r,d}$ be as in
\eqref{eq:corr-params}.  Let $X,Y_1,\ldots,Y_r$ be nonempty finite vertex
sets in an $r$-edge-coloured complete graph, and put
$p_i=p_i(X,Y_i)>0$ for $i\in[r]$.  For every choice of positive real numbers
$\alpha_1,\ldots,\alpha_r$ there exist a vertex $x\in X$, an index
$\ell\in[r]$, a real number $\lambda\ge-1$, a nonempty set $X'\subseteq X$,
and nonempty sets $Y_i'\subseteq N_i(x)\cap Y_i$ $(i\in[r])$ such that
\[
 |X'|\ge\beta_r\e^{-C_{r,d}(\lambda+1)^{1/d}}|X|,
 \qquad
 p_\ell(X',Y'_\ell)\ge p_\ell+\lambda\alpha_\ell,
\]
and, for every $i\in[r]$,
\[
 |Y_i'|=p_i|Y_i|,
 \qquad
 p_i(X',Y_i')\ge p_i-\alpha_i.
\]
\end{lemma}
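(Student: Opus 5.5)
This lemma is quoted from \cite{YangMao}, so the plan is to reassemble it from their correlation theorem (Theorem~3.4) and their graph translation (Lemma~4.1). The only new point is that the step sizes $\alpha_i$ and base densities $p_i$ may differ between colours. Both of their statements are already phrased colour by colour, so I expect essentially no new argument to be needed.

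\textbf{Setting up the random neighbourhoods.} I would pick $x\in X$ uniformly at random. For each $i$, since $|N_i(x)\cap Y_i|\ge p_i|Y_i|$ by the definition of $p_i=p_i(X,Y_i)$, I take $Y_i'$ to be a uniformly random subset of $N_i(x)\cap Y_i$ of size exactly $p_i|Y_i|$. This uses the same integrality convention as \cite{YangMao}, or one rounds and absorbs the error into $\alpha_i$. For $x'\in X$ define the normalised deviation
\[
Z_i(x')=\frac{1}{\alpha_i}\Bigl(\frac{|N_i(x')\cap Y_i'|}{|Y_i'|}-p_i\Bigr).
\]
By construction every $x'$ satisfies $\mathbb E\,Z_i(x')\ge0$. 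Averaging over $x$ and over the random subsets, products of the centred indicator vectors $\mathbf 1_{N_i(\cdot)\cap Y_i}-p_i$ are Gram-type sums. Hence the joint moments $\mathbb E_{x,x'}\prod_i(\cdot)$ appearing in the hypothesis of Yang--Mao's Theorem~3.4 are nonnegative. The scaling by $\alpha_i$ is harmless, since it enters only as a positive factor in each coordinate. Verifying this positivity condition with colour-dependent $\alpha_i$ and $p_i$ is the one point I would check line by line.

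\textbf{Applying the root filter.} Theorem~3.4 of \cite{YangMao}, the variable-order positive root filter with parameter $d\ge3$, then gives an index $\ell$ and a threshold $\lambda\ge-1$. It also gives an event of probability at least $\beta_r\e^{-C_{r,d}(\lambda+1)^{1/d}}$ on which $Z_\ell\ge\lambda$ and $Z_i\ge-1$ for all $i$. Their Lemma~4.1 converts this probabilistic statement into the graph statement: fix an outcome $(x,Y_1',\dots,Y_r')$ for which the set
\[
X'=\{x'\in X:\ Z_\ell(x')\ge\lambda,\ Z_i(x')\ge-1\ \forall i\}
\]
has at least the expected proportion of $X$. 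Unwinding the definition of $Z_i$ gives $p_\ell(X',Y'_\ell)\ge p_\ell+\lambda\alpha_\ell$ and $p_i(X',Y_i')\ge p_i-\alpha_i$, while $Y_i'\subseteq N_i(x)\cap Y_i$ has the stated size.

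\textbf{Main obstacle.} The main obstacle, if any, is the first step: confirming that Yang--Mao's moment hypothesis and the constants $\beta_r$ and $C_{r,d}\le C_0rd^2\log(2rd)$ do not depend on the $p_i$ or $\alpha_i$ being equal across colours. I expect this because their proof treats each colour's vector separately before taking the product, so in the write-up I would simply cite their two statements with this remark.
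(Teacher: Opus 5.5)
Your plan is exactly what the paper does: quote Theorem~3.4 and Lemma~4.1 of Yang--Mao, and observe that nothing there needs the $\alpha_i$ or $p_i$ to agree across colours. The paper offers no argument beyond that citation.

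The reconstruction you sketch to support the observation is, however, set up incorrectly. Your $Z_i(x')$ pairs a subset $Y_i'$ attached to $x$ with the \emph{full} neighbourhood $N_i(x')$. So it is not an inner product $\langle U_i,U_i'\rangle$ of two i.i.d.\ random vectors, which is the form the correlation theorem is applied to. The nonnegativity of mixed moments comes from the identity $\mathbb{E}\prod_i\langle U_i,U_i'\rangle^{k_i}=\bigl\|\mathbb{E}\bigotimes_iU_i^{\otimes k_i}\bigr\|^2$, and your asymmetric pairing has no such identity. Your claim that every $x'$ has $\mathbb{E}Z_i(x')\ge0$ is in fact false. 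Take $|Y_i|=4$, two vertices of $X$ with colour-$i$ neighbourhood $\{a,b\}$ in $Y_i$, and a third vertex $x'$ with neighbourhood $\{c,d\}$. Then $p_i=\tfrac12$, but the expected density of $N_i(x')$ in $Y_i'$ is $\tfrac13$. Separately, no rounding is needed: $p_i|Y_i|=\min_{y\in X}|N_i(y)\cap Y_i|$ is already an integer, and rounding would break the exact size $|Y_i'|=p_i|Y_i|$.

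The repair is to trim on both sides before forming inner products. For every $y\in X$ and $i\in[r]$, fix $N_i'(y)\subseteq N_i(y)\cap Y_i$ with $|N_i'(y)|=p_i|Y_i|$, and set $U_i(y)=(\mathbf 1_{N_i'(y)}-p_i\mathbf 1_{Y_i})/\sqrt{\alpha_ip_i|Y_i|}$. Since all trimmed sets have the same size, the cross terms cancel and $\langle U_i(x),U_i(x')\rangle=\alpha_i^{-1}\bigl(|N_i'(x)\cap N_i'(x')|/|N_i'(x)|-p_i\bigr)$. Because $N_i'(x')\subseteq N_i(x')$, this is at most your $Z_i(x')$ with $Y_i'=N_i'(x)$.

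Now apply the correlation theorem to $U(x),U(x')$ for independent uniform $x,x'\in X$, and average over $x$ as you propose. The events $\langle U_\ell,U_\ell'\rangle\ge\lambda$ and $\langle U_i,U_i'\rangle\ge-1$ give the two density bounds.

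Trimming first is essential. With untrimmed neighbourhoods the Gram form only gives an increment $\lambda\alpha_ip_i|Y_i|/|N_i(x)\cap Y_i|$, and trimming afterwards does not preserve a minimum density. This construction also makes your closing remark precise: $\alpha_i$ and $p_i$ enter only through the normalisation of $U_i$, while $\beta_r$ and $C_{r,d}$ depend on $r$ and $d$ alone.
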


The preceding lemma already allows a separate page set $Y_i$ and a separate
increment parameter $\alpha_i$ for each colour.  Tracking these quantities
through the proof of the Yang--Mao book theorem gives the following
colour-dependent version.

\begin{theorem}\label{thm:book}
Let $r\ge2$ and $d\ge3$ be integers, and let $\beta_r$ and $C_{r,d}$ be as in
\eqref{eq:corr-params}.  Let $t,m_1,\ldots,m_r$ be positive integers and let
$\lambda_0>0$.  For each
$i\in[r]$, let $p_i,\delta_i>0$ and define
\[
 L_i=\log(1/\delta_i),\qquad L=\max_iL_i,
 \qquad L_{p,i}=\log(1/p_i),
\]
\[
 \Pi_i=\frac{3\delta_i}{p_i}
       +\frac{6L_iL_{p,i}}{\lambda_0},
\]
and
\[
 \rho=\log(r/\beta_r),\qquad
 \Xi=2\rho+4C_{r,d}\lambda_0^{1/d}
      +\frac{12C_{r,d}L}{\lambda_0^{(d-1)/d}}.
\]
Suppose
\begin{equation}\label{eq:book-parameters}
 0<\delta_i\le\min\{p_i/4,1/4\},
 \qquad
 \lambda_0\ge\max\{2,6L\},
 \qquad
 t\ge\lambda_0(\min_i\delta_i)^{-1/(d-1)}.
\end{equation}
Let $X,Y_1,\ldots,Y_r$ be nonempty vertex sets such that
\begin{align}
 |N_i(x)\cap Y_i|&\ge p_i|Y_i|
 &&(x\in X,\ i\in[r]),\notag\\
 |Y_i|&\ge p_i^{-t}\e^{\Pi_it}m_i
 &&(i\in[r]),\notag\\
 |X|&\ge2rt\e^{rt\Xi}.\label{eq:book-reservoir}
\end{align}
Then there is a colour $i$, a colour-$i$ clique $T\subseteq X$ of size $t$,
and a set $P\subseteq Y_i\setminus T$ of size $m_i$ such that every edge
between $T$ and $P$ has colour $i$.
\end{theorem}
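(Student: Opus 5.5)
We run the standard book-algorithm iteration, using \cref{lem:increment} at each step and keeping a separate density and page set for every colour. The state consists of a reservoir $X_s$, page sets $Y_{i,s}$, partial spines $T_{i,s}\subseteq X\setminus X_s$ and current densities $p_{i,s}=p_i(X_s,Y_{i,s})$. Each $T_{i,s}$ is a colour-$i$ clique, all edges from $T_{i,s}$ to $X_s\cup Y_{i,s}$ have colour $i$, and $|Y_{i,s}|\ge |Y_i|\prod (\text{densities used in colour } i)$.

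A step invokes \cref{lem:increment} with $\alpha_i$ chosen proportional to $\delta_i$ divided by a budget, for instance $\alpha_{i}=\delta_i/t$ or a level-dependent variant. It produces $x$, $\ell$, $\lambda$, $X'$ and $Y_i'\subseteq N_i(x)\cap Y_i$ with $|Y_i'|=p_{i,s}|Y_{i,s}|$. We classify the step as a \emph{big step} if $\lambda\ge\lambda_0$ and as a \emph{density step} otherwise. In a big step we add $x$ to $T_\ell$ and replace $Y_\ell$ by $Y'_\ell$ and $X$ by $X'\setminus\{x\}$, restricting $X'$ to $N_\ell(x)$. Here the Yang--Mao trick matters: the increment $\lambda\alpha_\ell$ is what compensates the reservoir loss $\e^{-C_{r,d}(\lambda+1)^{1/d}}$. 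In a density step we discard $x$ and keep $Y$, using the boost in $p_\ell$; alternatively, following Yang--Mao, we keep $Y'$ but gain density. The process stops when some $|T_i|=t$, and then $P=Y_{i}$ trimmed to size $m_i$.

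The analysis has three bookkeeping parts. First, the density in each colour never drops below $p_i-\delta_i$, since the decrements are at most $\alpha_i$ per spine vertex. A density step raises $p_\ell$ by $\lambda\alpha_\ell$, so the total number of density steps in colour $\ell$ with $\lambda\in[2^j,2^{j+1})$ is bounded by $O(1/(2^j\alpha_\ell))$ as long as $p_\ell\le 1$. Each of the $t$ spine additions in colour $i$ multiplies $|Y_i|$ by at least $p_{i,s}\ge p_i(1-\delta_i/p_i)$, and density increases only help. This gives $|Y_{i,\mathrm{final}}|\ge p_i^{t}\e^{-\Pi_i t}|Y_i|$, where the term $3\delta_i/p_i$ accounts for the drops and the term $6L_iL_{p,i}/\lambda_0$ accounts for the relative-density compounding across levels. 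Together with the hypothesis on $|Y_i|$ this yields $|P|\ge m_i$; only $T$ needs removing, and $|Y_i|\ge m_i+t$ handles that. Second, the reservoir: at most $rt$ spine steps occur, each costing a factor $\beta_r/r$ (hence the $2\rho$) times $\e^{-C_{r,d}(\lambda+1)^{1/d}}$ with $\lambda<\lambda_0$, which contributes $4C_{r,d}\lambda_0^{1/d}$. The density steps with $\lambda\ge\lambda_0$ cost $\e^{-C(\lambda+1)^{1/d}}$, but the associated density gain $\lambda\alpha$ is limited by the total available increase, at most $\log(1/p)$ in the multiplicative sense or $L$. Summing $C\lambda^{1/d}$ subject to $\sum\lambda\alpha\lesssim L$ gives the term $12C_{r,d}L/\lambda_0^{(d-1)/d}$ per spine vertex, provided $\alpha\approx\delta$ and $t\ge\lambda_0\delta^{-1/(d-1)}$. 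Hence $|X|$ stays at least $2rt\ge 1$ throughout, via \eqref{eq:book-reservoir}.

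The main obstacle is the second accounting step: matching the concavity $\lambda^{1/d}$ against the linear density gain so that the big-$\lambda$ steps contribute exactly $12C_{r,d}L\lambda_0^{-(d-1)/d}$ per spine vertex. This depends on the colour-dependent choice of $\alpha_i$ and on the condition $t\ge\lambda_0(\min\delta_i)^{-1/(d-1)}$. I would follow the Yang--Mao proof line by line, replacing the common $p,\delta$ by $p_i,\delta_i$ and taking $L=\max_i L_i$ wherever a uniform bound is needed.
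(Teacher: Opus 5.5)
Your outline has the right overall shape: alternate spine extensions and boosts via the density-increment lemma, then bound the page loss and the reservoir loss. As written, though, it has genuine gaps.

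\textbf{The step rules are wrong.} First, the case split is backwards. You extend a spine when $\lambda\ge\lambda_0$ and take a ``density step'' when $\lambda<\lambda_0$. The lemma only guarantees $\lambda\ge-1$, so in your small-$\lambda$ branch there may be no increment at all, and nothing bounds the number of such steps. Your reservoir paragraph actually uses the opposite (correct) convention, with extensions at $\lambda\le\lambda_0$. Second, the lemma says nothing about $N_\ell(x)\cap X'$, so you cannot restrict $X'$ to $N_\ell(x)$. The paper instead extends $T_j$ for a majority colour $j$ with $|N_j(x)\cap X'|\ge(|X'|-1)/r$ and replaces $Y_j$ by $Y'_j$; this is where the factor $1/r$ in $\rho$ comes from. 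Third, ``discard $x$ and keep $Y$'' gains nothing, because the increment is for $p_\ell(X',Y'_\ell)$. A boost must pass to $Y'_\ell$, which shrinks the page by a factor of about $p_\ell$. So ``density increases only help'' is false for the page count. The term $6L_iL_{p,i}/\lambda_0$ in $\Pi_i$ is exactly $L_{p,i}$ times the number of colour-$i$ boosts per unit $t$, and you need a bound on that number.

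\textbf{The missing idea is level-dependent increments.} The paper takes $\alpha_i(s)=q_i(s)/t$, where $q_i(s)=p_i(X(s),Y_i(s))-a_i+\delta_i$ and $a_i=p_i(X(0),Y_i(0))$. Then a colour-$i$ extension gives $q_i(s+1)\ge(1-1/t)q_i(s)$, a colour-$i$ boost gives $q_i(s+1)\ge(1+\lambda/t)q_i(s)$, and no other step decreases $q_i$. Since $\delta_i/4\le q_i\le5/4$, one gets $\sum\log(1+\lambda/t)\le3L_i$ over colour-$i$ boosts. Hence there are at most $6L_it/\lambda_0$ boosts per colour and at most $2rt$ steps in total.

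With your default $\alpha_i=\delta_i/t$, the additive count only gives $O(t/(\delta_i\lambda_0))$ boosts. Both the page exponent and $\sum(\lambda+1)^{1/d}$ then come out worse by a factor of order $1/(\delta_iL_i)$, so neither $\Pi_i$ nor $\Xi$ is obtained. The $\Xi$ term does not come from a dyadic count either. It comes from the pointwise inequality $(\lambda+1)^{1/d}\le4t\lambda_0^{-(d-1)/d}\log(1+\lambda/t)$ for $\lambda_0<\lambda\le4t/\delta_i$, summed against $\sum\log(1+\lambda/t)\le3L_i$. This is where the hypothesis $t\ge\lambda_0\delta_i^{-1/(d-1)}$ enters.

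Finally, you do not need $|Y_i|\ge m_i+t$, which the hypotheses do not provide. The final page consists of common colour-$i$ neighbours of $T$, so it is automatically disjoint from $T$.
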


\begin{proof}
We follow the proof of the Yang--Mao book theorem, keeping the colour-dependent
quantities instead of replacing them by their minimum.  Let $X(s)$ be the
current reservoir, $Y_i(s)$ the current page set in colour $i$, and $T_i(s)$
the current colour-$i$ spine; initially $X(0)=X$, $Y_i(0)=Y_i$ and
$T_i(0)=\emptyset$.  Set
\[
 a_i=p_i(X(0),Y_i(0))\ge p_i,
 \qquad
 q_i(s)=p_i(X(s),Y_i(s))-a_i+\delta_i,
 \qquad
 \alpha_i(s)=\frac{q_i(s)}{t},
\]
so that $q_i(0)=\delta_i$.  While every spine has size less than $t$, apply
\cref{lem:increment} to the current sets with the positive parameters
$\alpha_i(s)$, obtaining $x$, $\ell$, $\lambda$, $X'$ and $Y_i'$.  If
$\lambda\le\lambda_0$, we \emph{extend a spine}: the edges from $x$ to
$X'\setminus\{x\}$ have one of $r$ colours, so some $j\in[r]$ satisfies
$|N_j(x)\cap X'|\ge(|X'|-1)/r$; we append $x$ to $T_j$, replace $X(s)$ by
$N_j(x)\cap X'$, and replace $Y_j(s)$ by $Y_j'$.  If $\lambda>\lambda_0$, we
\emph{boost} colour $\ell$: we replace $X(s)$ by $X'$ and $Y_\ell(s)$ by
$Y'_\ell$.  In either case all other page sets, and all spines other than the
extended one, are left unchanged.  As in \cite{YangMao}, these rules keep
each $T_i$ a colour-$i$ clique whose vertices are joined in colour $i$ to
every vertex of $X(s)\cup Y_i(s)$.
Restricting the first argument of a minimum density cannot decrease it.
Hence, if colour $i$ is extended at step $s$, then
\[
 q_i(s+1)\ge(1-1/t)q_i(s),
\]
a boost in colour $i$ with parameter $\lambda$ gives
\[
 q_i(s+1)\ge(1+\lambda/t)q_i(s),
\]
and in every other case $q_i(s+1)\ge q_i(s)$, because $Y_i$ is unchanged
while the reservoir shrinks.
Let $\mathcal B_i(s)$ be the set of boost steps in colour $i$ before time
$s$.  Before any spine reaches size $t$, at most $t$ steps extend colour $i$,
so iterating these one-step estimates, and using
$(1-1/t)^t\ge\tfrac14$ (valid since $t\ge\lambda_0\ge2$), gives
\[
 q_i(s)\ge
 \delta_i(1-1/t)^t
 \prod_{\sigma\in\mathcal B_i(s)}(1+\lambda(\sigma)/t)
 \ge \frac{\delta_i}{4}.
\]
Since every relative density is at most $1$, we also have
$q_i(s)\le1-a_i+\delta_i\le\tfrac54$, so the product above is at most
$5/\delta_i$; as $\delta_i\le\tfrac14$ gives $\log(5/\delta_i)\le3L_i$, it
follows that
\[
 \sum_{\sigma\in\mathcal B_i(s)}\log(1+\lambda(\sigma)/t)\le3L_i.
\]
Every boost counted here has $\lambda(\sigma)>\lambda_0$, and
$\lambda_0\le t$ by \eqref{eq:book-parameters}; hence
\[
 |\mathcal B_i(s)|\le \frac{6L_i}{\lambda_0}t.
\]
In particular, the bound $q_i(s)\ge\delta_i/4$ shows that the current minimum
density in colour $i$ is at least $a_i-3\delta_i/4\ge p_i-3\delta_i/4$.
We next bound the loss from the page sets.  A colour-$i$ spine extension or
boost replaces $Y_i$ by a colour-$i$ neighbourhood whose relative size is at
least $p_i-3\delta_i/4$.  There are at most $t$ spine extensions and at most
$6L_it/\lambda_0$ boosts in colour $i$.
Writing $\xi_i=3\delta_i/(4p_i)\le3/16$ and using
$\log(1-\xi_i)\ge-2\xi_i$, we obtain
\begin{align*}
 |Y_i(s)|
 &\ge p_i^{t+6L_it/\lambda_0}(1-\xi_i)^{2t}|Y_i(0)|\\
 &\ge p_i^t
 \exp\!\left(-\frac{6L_iL_{p,i}}{\lambda_0}t
              -\frac{3\delta_i}{p_i}t\right)|Y_i(0)|\\
 &=p_i^t\e^{-\Pi_it}|Y_i(0)|\ge m_i.
\end{align*}
It remains to check that the reservoir does not become empty.  Set
\[
 \varepsilon_0=\frac{\beta_r}{r}\,\e^{-C_{r,d}(\lambda_0+1)^{1/d}}.
\]
At a spine extension, $\lambda\le\lambda_0$, so \cref{lem:increment} and the
choice of the majority colour give
\[
 |X(s+1)|\ge\frac{|X'|-1}{r}
 \ge\frac{\beta_r}{r}\,\e^{-C_{r,d}(\lambda+1)^{1/d}}|X(s)|-1
 \ge\varepsilon_0|X(s)|-1,
\]
while a boost step gives
$|X(s+1)|\ge\beta_r\e^{-C_{r,d}(\lambda(s)+1)^{1/d}}|X(s)|
\ge\varepsilon_0\,\e^{-C_{r,d}(\lambda(s)+1)^{1/d}}|X(s)|$.

We next bound $\sum_\sigma(\lambda(\sigma)+1)^{1/d}$ over the boost steps.
After a boost in colour
$i$ the minimum density is still at most $1$, so
$\lambda(s)\alpha_i(s)\le1$; since $\alpha_i(s)=q_i(s)/t\ge\delta_i/(4t)$,
this gives $\lambda(s)\le4t/\delta_i$.  We claim that every threshold
$\lambda>\lambda_0$ arising at a boost step satisfies
\[
 \frac{(\lambda+1)^{1/d}}{\log(1+\lambda/t)}
 \le\frac{4t}{\lambda_0^{(d-1)/d}}.
\]
If $\lambda\le t$, then $\log(1+\lambda/t)\ge\lambda/(2t)$ and
$(\lambda+1)^{1/d}\le2\lambda^{1/d}$, so the ratio is at most
$4t\lambda^{-(d-1)/d}\le4t\lambda_0^{-(d-1)/d}$.  If $\lambda>t$, then
$\log(1+\lambda/t)\ge\log2$ and $\lambda+1\le5t/\delta_i$; since
\eqref{eq:book-parameters} gives $t\ge\lambda_0\delta_i^{-1/(d-1)}$, which
is equivalent to $t^{1/d}\delta_i^{-1/d}\le t\lambda_0^{-(d-1)/d}$, and
$5^{1/d}\le4\log2$ for $d\ge3$, the claim follows in this case as well.
Multiplying the claim by $\log(1+\lambda(\sigma)/t)$ and summing, using
$\sum_{\sigma\in\mathcal B_i(s)}\log(1+\lambda(\sigma)/t)\le3L_i$ for each
colour, we obtain
\[
 \sum_{\sigma}(\lambda(\sigma)+1)^{1/d}
 \le\frac{12rLt}{\lambda_0^{(d-1)/d}},
\]
the sum running over all boost steps.

Up to and including the first time a spine reaches size $t$, there are fewer
than $rt$ spine extensions, and the total number of boosts is at most
$\sum_i6L_it/\lambda_0\le rt$ because $\lambda_0\ge6L$.  Each step has the
form $|X(s+1)|\ge a_s|X(s)|-\epsilon_s$ with $0<a_s\le1$, where
$\epsilon_s=1$ at spine extensions and $\epsilon_s=0$ at boosts.  Since
there are at most $2rt$ steps,
\[
 \prod_qa_q
 \ge\varepsilon_0^{2rt}
 \exp\Bigl(-C_{r,d}\sum_{\sigma}(\lambda(\sigma)+1)^{1/d}\Bigr)
 \ge\e^{-rt\Xi},
\]
because $\log(1/\varepsilon_0)=\log(r/\beta_r)+C_{r,d}(\lambda_0+1)^{1/d}
\le\rho+2C_{r,d}\lambda_0^{1/d}$.  Iterating the recurrence and using
\eqref{eq:book-reservoir} therefore gives
\[
 |X(s)|\ge\Bigl(\prod_qa_q\Bigr)|X(0)|
 -\sum_j\epsilon_j\prod_{q>j}a_q
 \ge\e^{-rt\Xi}|X(0)|-rt\ge2rt-rt=rt>0.
\]
Thus the reservoir never empties and \cref{lem:increment} is applicable at
every step; since each step is a spine extension or a boost and their
numbers are bounded as above, the process performs at most $2rt$ steps, and
some spine reaches size $t$.  For that colour $i$, take $T=T_i$ and let $P$ consist of
$m_i$ vertices of the final page set $Y_i(s)$, which is possible since
$|Y_i(s)|\ge m_i$.  Every vertex of $Y_i(s)$ is a common colour-$i$
neighbour of $T$, and, the graph being simple, $Y_i(s)$ is disjoint from
$T$.  Thus $(T,P)$ is the required book.
\end{proof}

\section{Proof of the Ramsey bounds}\label{sec:mainproof}
Fix absolute constants in the following order.  Choose $A$ sufficiently
large, then $\zeta>0$ sufficiently small, then $\gamma>0$ sufficiently small,
so that
\begin{equation}\label{eq:constant-choice}
 130\gamma+20\gamma^2+3\zeta+\frac6A<\frac1{576},
 \qquad \gamma<10^{-3}.
\end{equation}
Then choose a constant $b\in(0,1)$ sufficiently small, then a rounding
constant $C_{\mathrm{rnd}}$ sufficiently large, and finally $K_0$
sufficiently large, each in terms of all preceding choices.

For $r\ge2$ and an integer $d\ge3$, define
\begin{equation}\label{eq:theta}
 \theta_{r,d}
 =\frac{b}{
 r^{d/(d-1)}d^{2d/(d-1)}
 (\log(2rd))^{2/(d-1)}}
\end{equation}
and
\begin{equation}\label{eq:T}
 \mathcal T_{r,d}
 =r^{(2d^2-1)/(d-1)^2}
  d^{2d(2d-1)/(d-1)^2}
  (\log(2rd))^{2d^2/(d-1)^2}.
\end{equation}
We first prove the following version of \cref{thm:offdiag}, keeping the
order $d$ of the root filter as a parameter.

\begin{proposition}\label{thm:offdiag-flex}
There are absolute constants $c,C>0$ such that the following holds.  Let
$r\ge2$ and let $d$ be an integer with
\[
 3\le d\le\max\{3,\log(2r)\}.
\]
Then there is a constant $A_{r,d}\le C\theta_{r,d}\mathcal T_{r,d}$ such that,
for every positive integer vector $\kk=(k_1,\ldots,k_r)$,
\[
 R(\kk)\le
 \left\lceil
 \exp\bigl(\Ent(\kk)-c\theta_{r,d}\pot(\kk)+A_{r,d}\bigr)
 \right\rceil.
\]
\end{proposition}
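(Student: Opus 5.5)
We argue by strong induction on $B=\sum_i k_i$, with $F(\kk)=\Ent(\kk)-c\theta\pot(\kk)+A$, where $\theta=\theta_{r,d}$ and $A=A_{r,d}$. The base cases are vectors with $\pot(\kk)$ small compared with $A/(c\theta)$, or with some $k_i$ small (say $\min_ik_i\le \mathcal T_{r,d}$ up to constants). In these cases \eqref{eq:ES} already gives $R(\kk)\le \e^{\Ent(\kk)}\le \lceil\e^{F(\kk)}\rceil$, because the saving $c\theta\pot(\kk)\le c\theta\,m$ is at most $A$. This is what fixes the size $A\asymp \theta\mathcal T_{r,d}$. So we may assume $M<2m$ and that $m$ is large.

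Take $n=\lceil \e^{F(\kk)}\rceil$ and an $r$-colouring of $K_n$. First apply \cref{lem:weighted} with $q_i=k_i/B$, $\eta$ of order $\theta/r$ (note $\min q_i\ge 1/(2r)$), and cap $s_\ast\asymp m\theta$. This gives cliques $S_i$ and a set $W$. If the cap is reached, write $\uu=(s_i)$. Then $W$ has size at least $n(1+\eta)^s\prod q_i^{s_i}$, and we need $|W|\ge R(\kk-\uu)$ by induction. By \cref{lem:entropy},
\[
\Ent(\kk)-\Ent(\kk-\uu)+\textstyle\sum_i s_i\log q_i=\DG_\kk(\uu),
\]
so it suffices that $s\log(1+\eta)+\DG_\kk(\uu)\ge c\theta(\pot(\kk)-\pot(\kk-\uu))$. \cref{lem:potential} with $\Gamma=c\theta$ bounds the right side by $4c\theta s/r+\DG/2+10c^2\theta^2m$. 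The first term is absorbed by $s\eta$ when $c$ is small, and the last by $s\eta\asymp \theta^2 m$ when $s=s_\ast\asymp \theta m r$ (we choose the cap this way, keeping $s_\ast<m/2$ so that coordinates stay positive). Any monochromatic clique found in $W$ then extends by $S_i$.

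Otherwise we stop earlier, with $W$ satisfying the weighted degree condition \eqref{eq:weighted-degree}, having paid the same entropy accounting for the $s<s_\ast$ steps so far; the potential loss is absorbed by $\DG/2$ plus $s\eta$, with the residual $10\Gamma^2 m$ charged against the book gain below. Set $X=Y_i=W$ and $p_i=q_i-\eta$, and apply \cref{thm:book} with spine $t=\theta m$ (up to constants). Choose $\delta_i\asymp \theta/r$ with $\delta_i\le p_i/4$, and $\lambda_0\asymp d\,L$ or similar, so that the hypothesis $t\ge\lambda_0(\min\delta_i)^{-1/(d-1)}$ holds; the definition \eqref{eq:theta} of $\theta$ is exactly what balances $\Pi_i t$ and $rt\Xi$ against $\theta t$. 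We request page size $m_i=R(\kk'-t\ee_i)$, where $\kk'$ is the current vector. The book then yields a colour-$i$ clique $T$ of size $t$ and a page of that size, and induction inside the page completes a monochromatic clique. The condition to check is that
\[
|W|\ge \max_i p_i^{-t}\e^{\Pi_i t}\lceil \e^{F(\kk'-t\ee_i)}\rceil
\]
and that $|W|\ge 2rt\e^{rt\Xi}$. Via \cref{lem:entropy} with $\uu=t\ee_i$, we have $\Ent(\kk')-\Ent(\kk'-t\ee_i)+t\log q_i=\DG_{\kk'}(t\ee_i)\ge \tfrac12 t^2(1-q_i)^2/k_i=\Omega(t^2/m)=\Omega(\theta t)$. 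Also $\pot$ drops by at most $2t$, costing $2c\theta t$. The errors are $t\log(q_i/p_i)\lesssim t\eta r$ and $\Pi_i t$, which must also be $\le$ a small constant times $\theta t$. The relative-entropy gain then beats all of these when $c$ and the constants in $\eta,\delta,t$ are small. The reservoir condition needs $F(\kk)-(\text{regularisation loss})\ge rt\Xi+O(\log rt)$, which holds by the base-case assumption that $m$ (hence $\Ent$) is large compared with $r\theta m\,\Xi$; this is where the range of $\mathcal T_{r,d}$ enters.

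The main obstacle is the parameter bookkeeping in the book step. We need simultaneously $\Pi_i t\ll\theta t$, which forces $\delta_i\ll\theta p_i$ and $\lambda_0\gg L\log(2r)/\theta$, and $t\ge \lambda_0\delta^{-1/(d-1)}$ with $t\le\theta m$. Reconciling these with $d\le\log(2r)$ is what produces the exponents in \eqref{eq:theta}. I would first fix $\delta=b'\theta/r$ and $\lambda_0=L\log(2r)/(b'\theta)$, then verify that the constraint on $t$ reduces to $m\gtrsim \mathcal T_{r,d}$. Smaller $m$ goes to the base case.
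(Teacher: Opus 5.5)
Your skeleton matches the paper's proof. You use induction on $B$ and weighted regularisation with $q_i=k_i/B$. When the cap $s_\ast\asymp\theta rm$ is reached you close by induction on $\kk-\ssv$. Otherwise you apply the book theorem to $X=Y_i=W$ with $t\asymp\theta m$ and pages $R(\kk-\ssv-t\ee_i)$, and the one-colour spine pays $\Omega(t^2/m)=\Omega(\theta t)$ in relative entropy.

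The genuine gap is the reservoir hypothesis $|W|\ge 2rt\e^{rt\Xi}$, which you attribute to $m$ being large. That cannot work, because both sides are linear in $m$: $\log|W|\le\log N=O(rm\log(2r))$ (as $\Ent(\kk)<2rm\log(2r)$ when $M<2m$), while $rt\Xi\asymp\theta rm\,\Xi$. What is actually needed is the scale-free inequality $\theta_{r,d}\Xi\le c_1\log(2r)$ for a small absolute $c_1$. This is \eqref{eq:Xi-bound}, combined with $\Ent(\kk)\ge\frac14 rm\log(2r)$. With $\lambda_0\asymp\log^2(2rd)/(b'\theta)$ and $C_{r,d}\le C_0rd^2\log(2rd)$, the term $\theta C_{r,d}\lambda_0^{1/d}$ is of order $b^{(d-1)/d}b'^{-1/d}\log(2rd)$. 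So the inequality holds only because of the exact shape of \eqref{eq:theta}, and because $b$ is chosen small after the constant in $\lambda_0$.

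Your last paragraph therefore reverses the roles. The conditions $\Pi_i\ll\theta$ and $t\ge\lambda_0\delta^{-1/(d-1)}$ only force $m\gtrsim\mathcal T_{r,d}$ for whatever $\theta$ is given. It is the reservoir condition that caps $\theta$ at \eqref{eq:theta}. As written, nothing in your argument uses the value of $\theta$: the same reasoning would give a saving of order $k/r$ with $\theta=b/r$. That is a sign the reservoir step has not actually been checked.

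Several smaller repairs are also needed:
\begin{itemize}
\item With $p_i=q_i-\eta$, the $-1$ in \eqref{eq:weighted-degree} is not absorbed. Take $p_i=q_i-2\eta$ and use $\eta|W|\ge1$.
\item Your first suggestion $\lambda_0\asymp dL$ would give $\Pi_i\asymp\log(2r)/d\gg\theta$. Your later choice $\lambda_0\asymp L\log(2r)/\theta$ is the right one; the paper takes $AL^2/\theta$.
\item The cap must be $s_\ast\asymp\theta rm$, not $\theta m$; otherwise $s_\ast\eta$ cannot cover $10\Gamma^2m$.
\item In your two-step accounting, \cref{lem:entropy} applied to $\kk'=\kk-\ssv$ produces $t\log q_i'$ with $q_i'=(k_i-s_i)/(B-s)$, whereas $p_i$ is built from $k_i/B$. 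The extra term $t\log(q_i'/q_i)$ is a real loss when colour $i$ is under-represented in $\ssv$. Its size is up to about $ts/B\lesssim\kappa\theta t$ if $s_\ast=\kappa\theta rm$, so you need $\kappa$ small. The paper avoids this by accounting in a single step, with $\uu=\ssv+t\ee_i$ relative to the original $\kk$ (where $z_i\ge t/6$). That single step also lets \cref{lem:potential} absorb the spine's potential drop, in place of your crude $2t$ bound.
\item Colours with $s_i+t\ge k_i$ must be given page size $1$.
\end{itemize}
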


We now choose the parameters used in the induction.  Put
\begin{equation}\label{eq:Gamma-eta}
 \Gamma=\gamma\theta_{r,d},
 \qquad
 \eta=\frac{16\Gamma}{r}.
\end{equation}

\begin{lemma}\label{lem:parameters}
There is an absolute constant $K_0$ such that the following holds.  Suppose
$\pot(\kk)>0$ and
\[
 m=\min_i k_i\ge m_0:=K_0\mathcal T_{r,d}.
\]
Let $q_i=k_i/B$ and define
\begin{equation}\label{eq:t-sstar}
 t=\lfloor\theta_{r,d}m\rfloor,
 \qquad
 s_\ast=\lceil4\Gamma rm\rceil,
\end{equation}
\[
 p_i=q_i-2\eta,
 \qquad
 \delta_i=\zeta p_i\theta_{r,d},
 \qquad
 L_i=\log(1/\delta_i),\quad L=\max_iL_i,
 \qquad
 \lambda_0=\frac{AL^2}{\theta_{r,d}}.
\]
Then the hypotheses in \eqref{eq:book-parameters} hold, and
\begin{equation}\label{eq:p-window}
 \frac1{3r}\le p_i\le\frac2r,
 \qquad
 L_i=\Theta(\log(2rd))\quad\text{uniformly in $i$},
\end{equation}
\begin{equation}\label{eq:Pi-bound}
 \Pi_i\le\left(3\zeta+\frac6A\right)\theta_{r,d}.
\end{equation}
Moreover, if $\Xi$ is as in \cref{thm:book}, then
\begin{equation}\label{eq:Xi-bound}
 rt\Xi+\log(2rt)\le\tfrac1{16}\,rm\log(2r).
\end{equation}
Finally, $\theta_{r,d}^2m\ge C_{\mathrm{rnd}}$ once $K_0$ is sufficiently
large.
\end{lemma}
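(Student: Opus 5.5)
This is a bookkeeping lemma: every claim follows from the definitions \eqref{eq:theta}, \eqref{eq:T}, \eqref{eq:Gamma-eta}, \eqref{eq:t-sstar} and the constraints \eqref{eq:constant-choice}, once we fix the scales. The basic scales are as follows. Since $d\le\max\{3,\log(2r)\}$, we have $d^{O(1/(d-1))}=O(1)$ and $r^{1/(d-1)}=O(1)$, because $r^{1/(d-1)}\le \e^{\log r/(d-1)}$ and $d-1\ge \tfrac12\log(2r)$ once $d$ is at its largest allowed value. For smaller $d$ we do not use this; we simply keep the explicit exponents. So $\theta_{r,d}\le b/r$, and in every case $\theta_{r,d}\ge b\,r^{-O(1)}$, with $\log(1/\theta_{r,d})=O(\log(2rd))$.

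\emph{Step 1: the window \eqref{eq:p-window}.} Since $\pot(\kk)>0$ gives $M<2m$ and $B\le rM$, $B\ge rm$, we get $\frac{1}{2r}<q_i<\frac2r$. Also $\eta=16\gamma\theta_{r,d}/r\le 16\gamma b/r^2$, which is tiny, so $p_i=q_i-2\eta\in[\frac1{3r},\frac2r]$. Then $\delta_i=\zeta p_i\theta_{r,d}$, so $L_i=\log(1/\zeta)+\log(1/p_i)+\log(1/\theta_{r,d})$. The second term is $\Theta(\log r)$, and the third lies between $\log(r/b)$ and $O(\log(2rd))$. Since $d\le\max\{3,\log 2r\}$ gives $\log(2rd)=\Theta(\log 2r)$, it follows that $L_i=\Theta(\log(2rd))$, and $L$ likewise. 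The condition $\delta_i\le\min\{p_i/4,1/4\}$ is immediate from $\zeta\theta_{r,d}<1/4$. The condition $\lambda_0=AL^2/\theta_{r,d}\ge\max\{2,6L\}$ holds because $L\ge1$ and $A\ge6$.

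\emph{Step 2: \eqref{eq:Pi-bound} and the condition on $t$.} We have $3\delta_i/p_i=3\zeta\theta_{r,d}$. Next, $6L_iL_{p,i}/\lambda_0=6L_iL_{p,i}\theta_{r,d}/(AL^2)\le 6\theta_{r,d}/A$, using $L_{p,i}=\log(1/p_i)\le L_i\le L$, which holds because $\delta_i\le p_i$. This gives \eqref{eq:Pi-bound}. For the last condition in \eqref{eq:book-parameters}, we need $\theta_{r,d}m-1\ge\lambda_0(\min_i\delta_i)^{-1/(d-1)}$. The right side is at most $AL^2\theta_{r,d}^{-1}(3r/(\zeta\theta_{r,d}))^{1/(d-1)}$. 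Substituting \eqref{eq:theta}, this is $\theta_{r,d}^{-1-1/(d-1)}$ times factors $r^{1/(d-1)}$, $L^2$ and constants. The requirement therefore becomes $m\gtrsim L^2 r^{1/(d-1)}\theta_{r,d}^{-(2d-1)/(d-1)}$. Expanding $\theta_{r,d}^{-(2d-1)/(d-1)}$ yields exactly the powers $r^{d(2d-1)/(d-1)^2}$, $d^{2d(2d-1)/(d-1)^2}$, $(\log)^{2(2d-1)/(d-1)^2}$. Adding $r^{1/(d-1)}$ gives the $r$-exponent $(2d^2-d+d-1)/(d-1)^2=(2d^2-1)/(d-1)^2$. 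Adding $L^2=O(\log^2(2rd))$ gives the log exponent $(4d-2+2(d-1)^2)/(d-1)^2=2d^2/(d-1)^2$. This matches \eqref{eq:T}, so the condition holds for $m\ge K_0\mathcal T_{r,d}$ with $K_0$ large in terms of $A,\zeta,b$. This matching of exponents is the point of definition \eqref{eq:T}, and is the main computation to verify carefully.

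\emph{Step 3: \eqref{eq:Xi-bound} and the final claim.} We have $rt\le r\theta_{r,d}m$, and $\Xi=2\rho+4C_{r,d}\lambda_0^{1/d}+12C_{r,d}L\lambda_0^{-(d-1)/d}$ with $\rho=O(r)$. The term $2\rho\, r\theta_{r,d} m=O(rb\,m)$ is at most $\frac1{48}rm\log(2r)$ for small $b$. For the middle term, $C_{r,d}\lambda_0^{1/d}\theta_{r,d}\le C_0rd^2\log(2rd)\,(AL^2)^{1/d}\theta_{r,d}^{(d-1)/d}$. By \eqref{eq:theta}, $\theta_{r,d}^{(d-1)/d}=b^{(d-1)/d}/(r d^2\log^{2/d}(2rd))$, so the product is $O(b^{2/3}A^{1/3}\log^{1-2/d}(2rd)L^{2/d})=O(b^{2/3}A^{1/3}\log(2r))$, which is also small for small $b$; indeed $\theta_{r,d}$ was defined precisely to make this balance. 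The third term is smaller than the second, since $\lambda_0\ge L$ and $\lambda_0^{-(d-1)/d}L\le\lambda_0^{1/d}$. Finally, $\log(2rt)\le\log(2rm)$ is negligible against $rm\log 2r$ because $m\ge m_0$ is large. Lastly, $\theta_{r,d}^2m\ge\theta_{r,d}^2K_0\mathcal T_{r,d}$, and $\theta_{r,d}^2\mathcal T_{r,d}\ge b^2$ by the exponent comparison, since the $r$-exponent $(2d^2-1)/(d-1)^2-2d/(d-1)=(d-1)^{-2}\cdot(2d-1)\cdot1>0$ and similarly for the other factors. Hence $\theta_{r,d}^2m\ge C_{\mathrm{rnd}}$ once $K_0\ge C_{\mathrm{rnd}}/b^2$.
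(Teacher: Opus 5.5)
Your proposal is correct and follows essentially the same route as the paper's proof: the window $\frac1{2r}<q_i<\frac2r$, the reduction of $t\ge\lambda_0(\min_i\delta_i)^{-1/(d-1)}$ to $m\gtrsim L^2r^{1/(d-1)}\theta_{r,d}^{-(2d-1)/(d-1)}\asymp\mathcal T_{r,d}$ by exponent matching, the direct computation of $\Pi_i$, and the estimate $C_{r,d}\lambda_0^{1/d}\theta_{r,d}=O(b^{(d-1)/d}\log(2r))$ for $\Xi$ are exactly the paper's steps, and you write out the exponent arithmetic in more detail. One small point, which the paper also glosses over: the factor $L^{2/d}$ contains a $\log(1/(b\zeta))$ contribution, so your $O(\cdot)$ there is not uniform in $b$, but this is harmless because $b^{2/3}\log(1/(b\zeta))\to0$ as $b\to0$ with $\zeta$ fixed.
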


\begin{proof}
Since $\pot(\kk)>0$, we have $M<2m$.  Hence
\begin{equation}\label{eq:q-window}
 \frac1{2r}<q_i<\frac2{r+1}
 \qquad(i\in[r]).
\end{equation}
The smallness of $\gamma$ and $b$ gives the bounds on $p_i$ in
\eqref{eq:p-window} and, because $\delta_i=\zeta p_i\theta_{r,d}$, also the
uniform estimate $L_i=\Theta(\log(2rd))$.
The only nontrivial condition in \eqref{eq:book-parameters} is
\[
 t\ge\lambda_0(\min_i\delta_i)^{-1/(d-1)}.
\]
Using \eqref{eq:p-window}, it is enough that
\[
 m\ge C'
 (\log(2rd))^2r^{1/(d-1)}
 \theta_{r,d}^{-(2d-1)/(d-1)}
\]
for a suitable absolute constant $C'$.
Substituting \eqref{eq:theta} into the right-hand side gives, up to an absolute
factor,
\[
 r^{(2d^2-1)/(d-1)^2}
 d^{2d(2d-1)/(d-1)^2}
 (\log(2rd))^{2d^2/(d-1)^2}
 =\mathcal T_{r,d},
\]
so the condition follows from $m\ge K_0\mathcal T_{r,d}$.
The same substitution gives
\[
 \theta_{r,d}^2\mathcal T_{r,d}
 =b^2 r^{(2d-1)/(d-1)^2}
 d^{2d/(d-1)^2}
 (\log(2rd))^{2+2/(d-1)^2},
\]
which is bounded below by a constant multiple of $b^2\log^2(2rd)$.
Thus $\theta_{r,d}^2m$ can be made uniformly large by increasing $K_0$.
For \eqref{eq:Pi-bound}, use $\delta_i=\zeta p_i\theta_{r,d}$ and
$\log(1/p_i)\le L_i$ to obtain
\[
 \Pi_i
 =3\zeta\theta_{r,d}
 +\frac{6L_i\log(1/p_i)}{AL^2/\theta_{r,d}}
 \le\left(3\zeta+\frac6A\right)\theta_{r,d}.
\]
It remains to bound $\Xi$.  Put $\tau=(d-1)/d$ and
$\ell=\log(2rd)$.  From \eqref{eq:theta},
\[
 \theta_{r,d}^{\tau}rd^2\ell\,L^{2/d}
 =O(b^\tau\ell).
\]
Since $C_{r,d}\le C_0rd^2\ell$ and $\lambda_0=AL^2/\theta_{r,d}$, this bounds
each of the two correlation terms of $\Xi$ by
\[
 O\!\left(\frac{b^\tau\log(2r)}{\theta_{r,d}}\right).
\]
Also $\rho=O(r+\log r)$.  Multiplying by
$rt\le r\theta_{r,d}m$ and using that $b$ was chosen sufficiently small gives
\eqref{eq:Xi-bound}; the term $\log(2rt)$ is absorbed because
$m\ge K_0\mathcal T_{r,d}$.
\end{proof}

We shall also need a lower bound on the reservoir produced by weighted
regularisation.

\begin{lemma}\label{lem:reservoir}
Under the hypotheses and notation of \cref{lem:parameters}, let $N$ satisfy
\[
 \log N\ge \Ent(\kk)-\Gamma\pot(\kk),
\]
and apply \cref{lem:weighted} with weights $q_i=k_i/B$ and stopping size
$s_\ast$.  If the procedure stops with $s<s_\ast$, then, provided $K_0$ is
sufficiently large,
\begin{equation}\label{eq:reservoir-conclusion}
 \eta|W|\ge1,
 \qquad
 |W|\ge2rt\e^{rt\Xi},
 \qquad
 |W|\ge\max_i p_i^{-t}\e^{\Pi_it}.
\end{equation}
\end{lemma}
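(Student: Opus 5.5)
The plan is to track logarithms through \cref{lem:weighted}. Since the procedure stops with $s<s_\ast$, we have $s\le s_\ast-1$, and \eqref{eq:weighted-size} with $(1+\eta)^s\ge1$ gives
\[
 \log|W|\ge\log N+\sum_i s_i\log q_i
 \ge \Ent(\kk)-\Gamma\pot(\kk)-s_\ast\log\bigl(1/\min_i q_i\bigr).
\]
By \eqref{eq:q-window}, $q_i>1/(2r)$, so the last term is at most $(4\Gamma rm+1)\log(2r)$. Also $\pot(\kk)\le m$, so $\Gamma\pot(\kk)\le\Gamma m$. All three conclusions in \eqref{eq:reservoir-conclusion} should then follow by comparing this lower bound with a lower bound on $\Ent(\kk)$ of order $rm\log(2r)$.

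For the lower bound on $\Ent(\kk)$, write $\Ent(\kk)=B\cdot\bigl(-\sum_i q_i\log q_i\bigr)$. The Shannon entropy of $\qq$ is at least $-\log\max_i q_i$, and \eqref{eq:q-window} gives $\max_i q_i<2/(r+1)$. Hence
\[
 \Ent(\kk)\ge B\log\frac{r+1}{2}\ge rm\log\frac{r+1}{2}.
\]
For $r\ge2$ we have $\log(2r)\le\kappa\log\frac{r+1}{2}$ for an absolute constant $\kappa$; one may take $\kappa=\log4/\log(3/2)$, since the ratio is largest at $r=2$. Therefore $\Ent(\kk)\ge rm\log(2r)/\kappa$. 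The subtracted terms are $\Gamma m+(4\Gamma rm+1)\log(2r)$, with $\Gamma=\gamma\theta_{r,d}\le\gamma<10^{-3}$. By \eqref{eq:constant-choice} these are at most a small fraction of $rm\log(2r)/\kappa$, up to the additive $\log(2r)$, which is negligible because $m\ge m_0$ is large. The outcome is
\[
 \log|W|\ge\tfrac18\,rm\log(2r),
\]
say, provided $\gamma$ is small and $K_0$ is large.

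The three targets are then checked as follows.
\begin{itemize}[leftmargin=*]
\item For the second inequality, \eqref{eq:Xi-bound} gives $\log(2rt)+rt\Xi\le\frac1{16}rm\log(2r)$, which is below $\log|W|$.
\item For the third inequality, \eqref{eq:p-window} and \eqref{eq:Pi-bound} give
\[
 t\log(1/p_i)+\Pi_it\le\theta_{r,d}m\bigl(\log(3r)+1\bigr)=O(b\,m\log(2r)),
\]
which is far below $\frac18 rm\log(2r)$.
\item For the first inequality, $\log(1/\eta)=\log\bigl(r/(16\gamma\theta_{r,d})\bigr)=O_\gamma(\log(2rd))$ by \eqref{eq:theta}. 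This is dominated by $rm\log(2r)$ once $K_0$ is large, since $m\ge K_0\mathcal T_{r,d}$ and $\mathcal T_{r,d}\ge\log(2rd)$.
\end{itemize}

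The only delicate step is the comparison in the second paragraph. The entropy lower bound must use $\log\frac{r+1}{2}$ rather than $\log r$, and it has to dominate $s_\ast\log(2r)$ uniformly in $r\ge2$, including $r=2$. This is exactly where the smallness of $\gamma$ in \eqref{eq:constant-choice} is used: we need $4\gamma\kappa$ well below $1/8$, which the stated choice of constants allows.
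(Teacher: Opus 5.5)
Your proposal is correct and follows the paper's proof essentially step by step. It derives the same bound $\log|W|\ge\Ent(\kk)-\Gamma m-s_\ast\log(2r)$ from \eqref{eq:weighted-size}. It uses the same entropy estimate $\Ent(\kk)\ge rm\log\frac{r+1}{2}$; your min-entropy argument is the paper's $\Ent(\kk)\ge B\log(B/M)$. It then reduces all three conclusions to $\log|W|\ge\frac18 rm\log(2r)$ via \eqref{eq:Xi-bound}, \eqref{eq:p-window}, \eqref{eq:Pi-bound} and the size of $K_0$. The differences are cosmetic: you use $\kappa=\log4/\log(3/2)$ in place of the paper's $4$, and you treat $\eta|W|\ge1$ more explicitly. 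In that step the constant in $\log(1/\eta)=O(\log(2rd))$ depends on $b$ as well as $\gamma$, which is harmless because $K_0$ is chosen after both.
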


\begin{proof}
Since $M<2m$, one has $B\ge rm$ and $k_i/B\ge1/(2r)$.  Moreover
$B\ge M+(r-1)m>M(r+1)/2$, so that
\[
 \Ent(\kk)=\sum_ik_i\log\frac B{k_i}
 \ge B\log\frac BM
 \ge rm\log\frac{r+1}2,
\]
and since $\log\frac{r+1}2\ge\tfrac14\log(2r)$ for every $r\ge2$
(equivalently, $(r+1)^4\ge32r$),
\[
 \Ent(\kk)\ge\tfrac14\,rm\log(2r).
\]
From \eqref{eq:weighted-size}, $s<s_\ast$ and $q_i\ge1/(2r)$,
\begin{align*}
 \log|W|
 &\ge \log N+s\log(1+\eta)+\sum_i s_i\log q_i\\
 &\ge \Ent(\kk)-\Gamma m-s_\ast\log(2r).
\end{align*}
Here $\pot(\kk)\le m$.  Since
$s_\ast=O(\Gamma rm)=O(\theta_{r,d}rm)$ and both $\gamma$ and $b$ were chosen
sufficiently small, the subtracted terms total at most
$\tfrac18rm\log(2r)$, and hence
\[
 \log|W|\ge\tfrac18\,rm\log(2r).
\]
The second inequality in \eqref{eq:reservoir-conclusion} now follows from
\eqref{eq:Xi-bound}, and the first because $K_0$ is sufficiently large.  Finally, by \eqref{eq:p-window} and \eqref{eq:Pi-bound},
\[
 t\log(1/p_i)+\Pi_it
 \le O(\theta_{r,d}m\log(2r)),
\]
which is again dominated by the lower bound for $\log|W|$ just obtained.
This proves the last inequality.
\end{proof}

\begin{proof}[Proof of \cref{thm:offdiag-flex}]
Fix $r,d$ and the constants above, and write
$\theta=\theta_{r,d}$.  Let
\[
 m_0=K_0\mathcal T_{r,d},
 \qquad
 A_{r,d}=\Gamma m_0.
\]
Since $\Gamma=\gamma\theta$, this satisfies the bound
$A_{r,d}\le C\theta_{r,d}\mathcal T_{r,d}$ of \cref{thm:offdiag-flex} with
$C=\gamma K_0$, and the saving coefficient there is $c=\gamma$.  We prove by
induction on $B=\sum_i k_i$ that
\begin{equation}\label{eq:induction}
 R(\kk)\le
 \left\lceil\exp\bigl(\Ent(\kk)-\Gamma\pot(\kk)+A_{r,d}\bigr)\right\rceil.
\end{equation}
If $\pot(\kk)=0$, this follows from \eqref{eq:ES}.  If
$m=\min_i k_i\le m_0$, then $\pot(\kk)\le m_0$ and the additive term again
makes \eqref{eq:induction} weaker than \eqref{eq:ES}.  We may therefore assume
\[
 \pot(\kk)>0,
 \qquad m>m_0.
\]
We record once how integer roundings are absorbed.  For every positive
integer vector $\mathbf x$ we have $\pot(\mathbf x)\le\min_ix_i$, while
writing $\Ent(\mathbf x)=\sum_ix_i\log\bigl(\sum_jx_j/x_i\bigr)$ shows that
the summand corresponding to a minimal coordinate is already at least
$(\min_ix_i)\log r$; since $\Gamma\le\gamma b<\log2$, every exponent
appearing in \eqref{eq:induction} is therefore nonnegative, and each ceiling
is at most twice the corresponding exponential.  Consequently, a margin of
$\log2$ in an exponent inequality below absorbs all integer roundings.  As
$\gamma$, $\zeta$ and $A$ are already fixed, we may choose
$C_{\mathrm{rnd}}$, and then $K_0$, so large that the margins obtained in
\eqref{eq:long-margin} and \eqref{eq:page-positive} below are at least
$\log2$; we use this without further comment.
Let $N$ be the right-hand side of \eqref{eq:induction}, and consider an
arbitrary $r$-colouring of $K_N$.  Apply \cref{lem:weighted} with
$q_i=k_i/B$, the parameter $\eta$ from \eqref{eq:Gamma-eta}, and the stopping
size $s_\ast$ from \eqref{eq:t-sstar}.  Let $S_i,W$ be the resulting sets and
write $s_i=|S_i|$, $\ssv=(s_1,\ldots,s_r)$ and $s=\sum_i s_i$.  If $s_i\ge k_i$ for some $i$ then we are
done, so assume $s_i<k_i$ for every $i$.

Suppose first that $s=s_\ast$.  Put $\cc=\kk-\ssv$ and
$\DG=\DG_{\kk}(\ssv)$.  By \eqref{eq:weighted-size} and
\cref{lem:entropy},
\begin{align*}
 \log|W|
 &\ge \Ent(\kk)-\Gamma\pot(\kk)+A_{r,d}
      +\sum_i s_i\log q_i+s\log(1+\eta)\\
 &=\Ent(\cc)-\Gamma\pot(\cc)+A_{r,d}
   +\DG+s\log(1+\eta)
   -\Gamma\bigl(\pot(\kk)-\pot(\cc)\bigr).
\end{align*}
Using \cref{lem:potential},
$\log(1+\eta)\ge\eta/2=8\Gamma/r$, and
$s=s_\ast\ge4\Gamma rm$, we obtain
\begin{align}
 &\DG+s\log(1+\eta)
 -\Gamma\bigl(\pot(\kk)-\pot(\cc)\bigr)\notag\\
 &\hspace{2cm}\ge
 \frac12\DG+\frac{4\Gamma s}{r}-10\Gamma^2m
 \ge6\Gamma^2m.\label{eq:long-margin}
\end{align}
Since $6\Gamma^2m\ge6\gamma^2C_{\mathrm{rnd}}\ge\log2$ by
\cref{lem:parameters} and the choice of $C_{\mathrm{rnd}}$, the rounding
convention gives $|W|\ge R(\cc)$ by the induction hypothesis.  Hence $W$
contains, for some $j$, a colour-$j$ clique of size $k_j-s_j$, which
together with $S_j$ completes the target $K_{k_j}$.

We may therefore assume $s<s_\ast$.  The minimum-degree conclusion
\eqref{eq:weighted-degree} holds, and \cref{lem:reservoir} gives
\eqref{eq:reservoir-conclusion}.  In particular, for every $w\in W$,
\[
 |N_i(w)\cap W|
 \ge(q_i-\eta)|W|-1
 \ge(q_i-2\eta)|W|=p_i|W|.
\]
For each colour $i$ with $s_i+t<k_i$, let
\[
 \cc^{(i)}=\kk-\ssv-t\ee_i,
 \qquad m_i=R(\cc^{(i)}).
\]
If $s_i+t\ge k_i$, set $m_i=1$; in that case a colour-$i$ spine of size
$t$ already completes the target.
Fix a colour $i$ with $s_i+t<k_i$.  Put
$\uu=\ssv+t\ee_i$, $U=s+t$, and
$\DG_i=\DG_{\kk}(\uu)$.  By the induction hypothesis,
\eqref{eq:entropy-identity} and the rounding convention, the page condition
$|W|\ge p_i^{-t}\e^{\Pi_it}m_i$ follows once
\begin{align*}
 \mathcal M_i:={}&
 \DG_i+s\log(1+\eta)
 -\Gamma\bigl(\pot(\kk)-\pot(\cc^{(i)})\bigr)\\
 &\qquad -t\log(q_i/p_i)-\Pi_it\ge\log2.
\end{align*}
In the notation of \cref{lem:entropy}, $z_i=s_i+t-q_i(s+t)$.  From
\eqref{eq:q-window},
$s<s_\ast\le5\Gamma rm$ (note that $\Gamma rm\ge\gamma C_{\mathrm{rnd}}\ge1$),
$t\ge\theta m/2$, and $\gamma<10^{-3}$, we have
\[
 z_i\ge(1-q_i)t-q_is\ge\frac t6.
\]
Since $k_i<2m$, \eqref{eq:entropy-lower} gives
\[
 \DG_i\ge\frac{z_i^2}{2k_i}\ge\frac{t^2}{144m}.
\]
Applying \cref{lem:potential} to the potential term of $\mathcal M_i$, and
using $s\log(1+\eta)\ge4\Gamma s/r$, gives
\[
 \mathcal M_i\ge
 \frac12\DG_i-\frac{4\Gamma t}{r}-10\Gamma^2m
 -t\log(q_i/p_i)-\Pi_it.
\]
Because $t\ge\theta m/2$,
\[
 \frac12\DG_i\ge\frac{\theta t}{576}.
\]
Moreover, $q_i\ge1/(2r)$ and $p_i=q_i-2\eta$ imply
\[
 \log(q_i/p_i)
 =-\log\left(1-\frac{2\eta}{q_i}\right)
 \le128\gamma\theta,
\]
and $\Pi_it\le(3\zeta+6/A)\theta t$ by \eqref{eq:Pi-bound}.  Thus
\begin{equation}\label{eq:page-positive}
 \mathcal M_i\ge
 \theta t\left(
 \frac1{576}-130\gamma-20\gamma^2-3\zeta-\frac6A
 \right)\ge\log2
\end{equation}
by \eqref{eq:constant-choice}, since
$\theta t\ge\theta^2m/2\ge C_{\mathrm{rnd}}/2$ and $C_{\mathrm{rnd}}$ is
sufficiently large.  Hence
\[
 |W|\ge p_i^{-t}\e^{\Pi_it}m_i
\]
for every colour $i$ with $s_i+t<k_i$.  If $s_i+t\ge k_i$, the same
inequality, now with $m_i=1$, is the last conclusion of \cref{lem:reservoir}.
We may now apply \cref{thm:book} with
$X=Y_1=\cdots=Y_r=W$.  It gives a colour $i$, a colour-$i$ spine $T$ of size
$t$, and a page set $P$ of size $m_i$.  If $s_i+t\ge k_i$, then
$S_i\cup T$ already contains a colour-$i$ $K_{k_i}$.  Otherwise
$|P|=R(\cc^{(i)})$, so $P$ contains, for some $j$, a colour-$j$ clique
whose size is the $j$th coordinate of $\cc^{(i)}$.  If $j=i$, join it to
$S_i\cup T$; if $j\ne i$,
join it to $S_j$.  In either case a target $K_{k_j}$ is completed.  This
proves \eqref{eq:induction}, and hence the proposition.
\end{proof}

\begin{proof}[Proof of \cref{thm:offdiag}]
It remains to choose the order of the root filter.  Let
\[
 d=\max\left\{3,\left\lceil\frac12\log(2r)\right\rceil\right\}.
\]
Then $d=\Theta(\log(2r))$, so the factors $r^{1/(d-1)}$, $d^{1/(d-1)}$ and
$(\log(2rd))^{1/(d-1)}$ are bounded above and below by absolute constants.
Substituting into \eqref{eq:theta} and \eqref{eq:T} therefore gives
\[
 \theta_{r,d}=\Theta\!\left(\frac1{r\log^2(2r)}\right),
 \qquad
 \mathcal T_{r,d}=O(r^2\log^6(2r)),
\]
and hence $A_{r,d}\le C\theta_{r,d}\mathcal T_{r,d}=O(r\log^4(2r))$.  With
this choice of $d$, \eqref{eq:offdiag} follows from \cref{thm:offdiag-flex}.
\end{proof}

\begin{proof}[Proof of \cref{thm:main}]
Apply \cref{thm:offdiag} to the diagonal vector $(k,\ldots,k)$.  Since
$\Ent(k,\ldots,k)=rk\log r$ and $\pot(k,\ldots,k)=k$, the additive term in
\eqref{eq:offdiag} is absorbed into half of the saving whenever
$k\ge K r^2\log^6(2r)$, after increasing the absolute constant $K$.
\end{proof}

\cref{thm:offdiag} also gives the following asymptotic consequence.

\begin{corollary}\label{cor:offdiag}
There is an absolute constant $c>0$ such that the following holds for every
$r\ge2$.  Let $\boldsymbol\alpha=(\alpha_1,\ldots,\alpha_r)\in\R_{>0}^r$
satisfy $\alpha_{\max}<2\alpha_{\min}$.  Then
\[
 \limsup_{n\to\infty}\frac1n
 \log R(\lceil\alpha_1n\rceil,\ldots,\lceil\alpha_rn\rceil)
 \le \Ent(\boldsymbol\alpha)
 -c\,\frac{2\alpha_{\min}-\alpha_{\max}}{r\log^2(2r)},
\]
where $\Ent(\boldsymbol\alpha)$ is defined by the homogeneous extension of
\eqref{eq:entropy-def-intro}.
\end{corollary}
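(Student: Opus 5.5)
We apply \cref{thm:offdiag} to the vector $\kk^{(n)}=(\lceil\alpha_1n\rceil,\ldots,\lceil\alpha_rn\rceil)$, divide the logarithm by $n$, and let $n\to\infty$, using the constant $c$ of \cref{thm:offdiag}. The argument has three steps, carried out in this order.

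First, we control the ceiling. Since the exponent in \eqref{eq:offdiag} grows linearly in $n$, the outer ceiling contributes at most $\log 2$ to the logarithm, and hence $o(n)$. The additive term $Cr\log^4(2r)$ is independent of $n$, so after division by $n$ it tends to $0$ (here $r$ is fixed). We therefore obtain
\[
 \frac1n\log R(\kk^{(n)})\le \frac1n\Ent(\kk^{(n)})-\frac{c}{r\log^2(2r)}\cdot\frac{\pot(\kk^{(n)})}{n}+o(1).
\]

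Second, we compute the limits of the two $n$-dependent terms. $\Ent$ is positively homogeneous of degree one, since $\Ent(\lambda\mathbf x)=\lambda\Ent(\mathbf x)$ because the $\log\lambda$ terms cancel. It is also continuous on $\R_{>0}^r$. Writing $\kk^{(n)}=n(\boldsymbol\alpha+\boldsymbol\epsilon_n)$ with $\|\boldsymbol\epsilon_n\|_\infty\le 1/n$, we get $\Ent(\kk^{(n)})/n=\Ent(\boldsymbol\alpha+\boldsymbol\epsilon_n)\to\Ent(\boldsymbol\alpha)$. Similarly, $\min_i k^{(n)}_i/n\to\alpha_{\min}$ and $\max_i k^{(n)}_i/n\to\alpha_{\max}$. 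Because $\alpha_{\max}<2\alpha_{\min}$, we have $\pot(\kk^{(n)})=2\min_i k^{(n)}_i-\max_i k^{(n)}_i>0$ for all large $n$, and therefore $\pot(\kk^{(n)})/n\to 2\alpha_{\min}-\alpha_{\max}$.

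Third, taking the $\limsup$ in the displayed inequality gives exactly the stated bound. There is no real obstacle here. The only points needing care are that the ceiling and the additive error vanish after dividing by $n$, and that the continuity of $\Ent$ is applied at a point where all coordinates are positive, which holds since $\boldsymbol\alpha\in\R_{>0}^r$.
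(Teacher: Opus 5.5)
Your proposal is correct and is essentially the paper's own proof. You apply \cref{thm:offdiag} to $\kk^{(n)}$, absorb the ceiling and the $n$-independent term $Cr\log^4(2r)$ into $o(n)$, and use homogeneity and continuity to get $\Ent(\kk^{(n)})=n\Ent(\boldsymbol\alpha)+o(n)$ and $\pot(\kk^{(n)})=(2\alpha_{\min}-\alpha_{\max})n+O(1)$, spelling out these limits in more detail than the paper does.
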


\begin{proof}
Let $\kk^{(n)}=(\lceil\alpha_1n\rceil,\ldots,\lceil\alpha_rn\rceil)$.
Apply \cref{thm:offdiag} to $\kk^{(n)}$, divide by $n$, and let $n\to\infty$.
By homogeneity and continuity,
\[
 \Ent(\kk^{(n)})=n\Ent(\boldsymbol\alpha)+o(n),
 \qquad
 \pot(\kk^{(n)})=(2\alpha_{\min}-\alpha_{\max})n+O(1),
\]
which gives the result.
\end{proof}

Thus the recursive argument improves the classical multinomial exponent
throughout a neighbourhood of the diagonal, not only on the diagonal
itself.

\section*{AI Usage Disclosure}

Anthropic's Claude Fable 5 and OpenAI's GPT-5.6 Sol were used for
exploratory discussions during the development of the proofs and to assist
with drafting and revising the manuscript.  The author takes full
responsibility for the paper's contents and correctness.

\end{document}